\documentclass[11pt,a4paper]{article}
\usepackage{amsmath,amssymb,amsthm}
\usepackage[margin=25mm]{geometry}
\usepackage{longtable}
\usepackage{authblk}
\newtheorem{theorem}{Theorem}[section]
\newtheorem{lemma}[theorem]{Lemma}
\newtheorem{proposition}[theorem]{Proposition}
\newtheorem{corollary}[theorem]{Corollary}
\theoremstyle{definition}\newtheorem{definition}[theorem]{Definition}
\theoremstyle{remark}
\newcommand{\zrl}{z_{\mathrm{RL}}}
\newcommand{\orth}{\perp_R}
\newcommand{\RW}{(\mathrm{RW3}^{+})}

\newcommand{\dd}{\delta}

\title{A Twelve-Row Seed and a One-Row Extension for\\Five-Column Recursive-Line Zarankiewicz Numbers}
\author[1]{Min Xi}
\author[2]{Jingya Chang\thanks{Corresponding author: jychang@gdut.edu.com}}

\affil[1]{School of Mathematics and Statistics, Guangdong University of Foreign Studies, Guangzhou, China}
\affil[2]{School of Mathematics and Statistics,  Guangdong University of Technology, Guangzhou, China}

\date{\today}
\begin{document}\maketitle

\begin{abstract}
We prove that the five-column recursive-line Zarankiewicz number satisfies
$
z_{RL}(m,5)=3m+5
$
for every integer \(m\ge 12\). The proof is based on an explicit \(12\times5\) seed configuration combined with a recursive one-row extension scheme. The seed attains the five-column cell bound and contains no unoccupied cells. Two selected pairs in the seed are opened and replaced by parallel paths, ensuring that each inserted row increases the total number of augmented edges by three while preserving the required simple configuration.
The main challenge lies in verifying the strengthened recursive-line condition \({\rm (RW3+)}\) uniformly across all extension lengths. To this end, we develop a distance-reducing rectangle lemma that transfers certified inner-product relations along extension paths, reducing relations between distant labels to those at smaller path distances. This confines the verification for arbitrarily long extensions to a finite collection of seed and interface certificates. The resulting construction successfully satisfies pair identification, preserves distinct selected-edge classes, and certifies the orthogonality of distinct edge representatives.
Furthermore, combining this constructed lower bound with the parameter hierarchy yields
$
z_2(m,5)=z_{RL}(m,5)=3m+5
$
throughout this range.
\end{abstract}

\noindent\textbf{Keywords:}Zarankiewicz number; recursive-line Zarankiewicz number; augmented bipartite graph; recursive rectangle closure; sum-of-squares rank.\\
\textbf{MSC2020:} 05C35; 05C85, 11E25.

\section{Introduction}

The classical Zarankiewicz problem asks for the maximum number of occupied
cells in an $m\times n$ array subject to the condition that no $2\times2$
subarray is completely occupied. Equivalently, it asks for the maximum
number of edges in a $C_4$-free bipartite graph with prescribed part sizes.
The problem was introduced by Zarankiewicz \cite{Z1951}, and its
basic extremal estimates and structural developments have been studied
extensively in extremal graph theory; see, for example, the work of
K\H{o}v\'ari, S\'os and Tur\'an \cite{KST1954}, Cul\'ik \cite{Culik1956},
Reiman \cite{Reiman1958}, and the broader accounts in
\cite{Guy1969,Furedi1996,FurediSimonovits2013}.  For fixed small numbers of
columns, the problem becomes considerably more rigid and exact values are
available in a number of cases.  In particular, for five columns one has
\[
    z(m,5)=m+10,\qquad m\geq 10,
\]
as recorded in \cite{GZ1969}.  This classical value provides the
one-edge extremal boundary from which the denser recursive-line problem
considered here is developed.

The relevance of the Zarankiewicz problem to the present setting comes from
its connection with sums of squares of biquadratic forms.  A biquadratic
form can be represented by squares of bilinear forms, and the minimum number
of squares in such a representation is its SOS rank.  The general theory of
sums of squares provides the algebraic background for this viewpoint
\cite{Choi1975,ChoiLamReznick1995}.  In the combinatorial representation
used in the Zarankiewicz setting, a $C_4$-free bipartite graph supplies the
basic one-edge configuration, while additional algebraic structure can be
encoded by allowing selected pairs of cells to represent squares of
two-term bilinear forms.  This observation led to a sequence of
combinatorial extensions of the classical parameter, in which the
combinatorial configuration is required to preserve irreducibility of the
associated biquadratic form.

The first such extension relevant here is the augmented Zarankiewicz
framework.  Qi, Cui, and Xu introduced the augmented Zarankiewicz number and
the limited augmented Zarankiewicz number, denoted by $z_A(m,n)$ and
$z_L(m,n)$, respectively \cite{QiCuiXu2026Rank,QiCuiXu2026Augmented}.
The limited version fixes the one-edge part at the classical extremal value
$|E_1|=z(m,n)$ while allowing selected two-edges.  This gives a direct
combinatorial mechanism for obtaining SOS-rank lower bounds beyond the
classical Zarankiewicz number, summarized by
\[
    \operatorname{BSR}(m,n)
    \geq z_A(m,n)
    \geq z_L(m,n)
    \geq z(m,n).
\]
Thus the role of the classical Zarankiewicz number in the present problem
is not discarded; rather, it becomes the fixed one-edge skeleton on which
the augmented configurations are built.

The augmented admissibility conditions, however, are stronger than is
necessary for irreducibility.  Qi, Cui, and Xu subsequently introduced a
weak version of the framework, leading to the weak limited augmented
Zarankiewicz number $z_{wL}(m,n)$.  The weak formulation relaxes the
original local restrictions while retaining sufficient conditions for the
associated doubly simple biquadratic form to remain irreducible
\cite{QiCuiXu2026Weak}.  In particular, the resulting parameters satisfy
\[
    \operatorname{BSR}(m,n)
    \geq z_{wL}(m,n)
    \geq z_L(m,n)
    \geq z(m,n).
\]
This relaxation is important because it shows that configurations excluded
by the original augmented conditions can nevertheless contribute to
irreducible SOS representations.  The weak framework therefore enlarges
the class of admissible configurations, but it still relies on a
cross-cell condition to certify the required orthogonality relations.

The next step is to replace that literal local condition by the recursive
algebraic mechanism that is actually used by the orthogonality relations.
L\"ofberg and Qi introduced the second order Zarankiewicz number $z_2(m,n)$
and the intermediate recursive-line parameter $z_{RL}(m,n)$
\cite{LQ}.  The recursive-line framework retains the simplicity
and $C_4$-free requirements of the underlying configuration but replaces
the literal weak cross-cell test by a recursive rectangle condition,
strengthened to $(RW3^+)$.  The resulting parameters fit into the hierarchy
\[
    \operatorname{BSR}(m,n)
    \geq z_2(m,n)
    \geq z_{RL}(m,n)
    \geq z_{wL}(m,n)
    \geq z(m,n).
\]
The significance of this step is that orthogonality need not always be
certified by an immediately visible unoccupied opposite cell.  It may
instead be propagated through a finite sequence of rectangle identities.
Consequently, the recursive-line parameter provides a natural setting in
which configurations denser than those admitted by the weak framework can
still be certified as irreducible.  Recent results already demonstrate this
separation in small dimensions and in the three-column case
\cite{QiLofbergChen2026}.

We focus on the five-column case. Combining the classical value $z(m,5)=m+10$ with the cell-conservation identity
\[
|E_1|+2|E_2|+H=5m,
\]
where $H$ denotes the number of unoccupied cells, gives the general density bound
\[
z_{RL}(m,5)\leq z_2(m,5)\leq3m+5.
\]

Attaining the maximum cell-density ceiling $3m+5$ for an arbitrary row count $m$ presents a double structural challenge. Geometrically, any candidate configuration must be strictly hole-free ($H=0$), leaving no unused cell capacity. More critically, the primary obstacle lies in satisfying the recursive admissibility under $\text{RW3}^+$.

Achieving the correct edge counts is not enough, because selected two-edges introduce strict algebraic constraints. Under $\text{RW3}^+$, orthogonality must hold globally among all edge representatives. Because rectangle identities can propagate constraints across different parts of the grid, a locally valid extension can easily violate global consistency. Consequently, as the row dimension $m$ grows, relations spanning large distances cannot be verified through local checks or finite computer searches, creating a major obstacle for arbitrary $m \ge 12$.

To overcome this obstacle, our approach separates the finite base construction from the infinite extension argument. We begin with an explicit $12 \times 5$ seed configuration that achieves the exact density bound $z_{RL}(12,5) = 3 \cdot 12 + 5$. This seed is chosen not only to attain the cell bound but also to incorporate a compatible interface structure with selected pairs. These pairs are opened and replaced by a dual-path extension: each inserted row introduces one new one-edge and increases the number of two-edges by two in net, so that the total number of augmented edges increases by three while keeping the grid strictly hole-free.

To certify $\text{RW3}^+$ across arbitrary row counts, we establish a distance-reducing path lemma. This lemma reduces inner-product defects between distantly separated labels to smaller separations, ultimately reaching certified relations within the seed or interfaces. This provides a finite control mechanism for infinite extensions, avoiding the need to verify individual values of $m$.

This construction yields the following result.
\[
\boxed{z_{RL}(m,5)=3m+5,\qquad m\geq12.}
\]
Since
\[
z_{RL}(m,5)\leq z_2(m,5)\leq3m+5,
\]
the same construction also gives
\[
z_2(m,5)=z_{RL}(m,5)=3m+5,\qquad m\geq12.
\]
Thus the five-column recursive-line problem attains its general cell-density ceiling for every $m\geq12$, providing an explicit infinite family in which the recursive-line lower bound matches the upper bound.

The remainder of the paper is organized as follows. Section~2 recalls the augmented-grid terminology, the certificate relations, the rectangle closure rules, and the strengthened recursive-line condition $(\text{RW3}^+)$. Section~3 constructs the $12\times5$ seed and describes the recursive double-path extension, together with the resulting edge counts and simplicity properties. Section~4 establishes the uniform resolution of the selected pairs, proves the distance-reducing path lemma, and derives the orthogonality relations within the extended part. Section~5 proves compatibility between the extended paths and the fixed seed and completes the verification of $(\text{RW3}^+)$ for every extension length. The appendices record the finite certificate paths used in the seed and interface verifications.

\section{Preliminaries and Closure Framework}
\label{sec:preliminaries}

We work throughout with the strengthened recursive-line closure framework for augmented bipartite graphs introduced by L\"ofberg and Qi~\cite{LQ}.
The purpose of this section is to fix the terminology and notation used in the construction and to recall the certificate rules underlying the admissibility condition.
We first describe the augmented grid configuration and its cell accounting, then introduce the certificate relations and rectangle closure rules, and finally state the strengthened recursive-line condition used throughout the paper.

\subsection{Augmented Grid Configurations}
\label{sec:augmented_grid}

We represent an augmented bipartite graph on an $m\times 5$ grid whose cells are indexed by

$$
p=(r,c),\qquad 1\le r\le m,\quad 1\le c\le 5.
$$

An occupied cell contains either a one-edge or one half of a selected two-edge.
A cell containing no edge element is called \emph{unoccupied} (or a \emph{hole}).
Let $H$ denote the total number of unoccupied cells.

Let $E_1$ denote the set of one-edges and let $E_2$ denote the set of selected two-edges. Each selected two-edge consists of two occupied cells, and the supports of distinct selected two-edges are required to be disjoint. The underlying one-edge graph must be $C_4$-free. A configuration satisfying these conditions is called \emph{simple}.

Since every one-edge occupies one cell and every selected two-edge occupies two cells, counting the occupied and unoccupied cells gives the basic cell-conservation identity
\begin{equation}
|E_1|+2|E_2|+H=5m.
\label{eq:cell_conservation}
\end{equation}

The notation $E_1$, $E_2$, and $H$ will be used throughout the construction.
In particular, the equality case $H=0$ corresponds to a hole-free configuration, which is the form used by the recursive construction below.

\subsection{Certificate Relations and Rectangle Closure}
\label{sec:certificate_closure}

The recursive-line framework does not require the relevant orthogonality relations to be checked by direct numerical inner-product computation.
Instead, relations are certified recursively from a small collection of geometric rules.

For every occupied cell $p$, associate a formal unit coefficient vector $v_p$.
For two distinct occupied cells $p$ and $q$, define

$$
\delta(p,q)=
\begin{cases}
1, & \text{if } \{p,q\}\in E_2,\\
0, & \text{otherwise}.
\end{cases}
$$

Thus, the value $\delta(p,q)=1$ records that the two cells form the two halves of the same selected two-edge.

\paragraph{Line rule.}
If two occupied cells $p$ and $q$ lie in a common row or a common column, their coefficient vectors satisfy
\begin{equation}
\langle v_p,v_q\rangle=\delta(p,q).
\label{eq:line_rule}
\end{equation}
Consequently, a selected pair lying on a common row or column is immediately certified as a pair of identical representatives, while two distinct edge elements on a common row or column are certified to be orthogonal.

\paragraph{Rectangle rule.}
Consider a genuine $2\times2$ rectangle in the grid.
Let ${p,q}$ and ${r,s}$ be its two diagonals.
The coefficient vectors satisfy the rectangle identity
\begin{equation}
\langle v_p,v_q\rangle+\langle v_r,v_s\rangle
=\delta(p,q)+\delta(r,s).
\label{eq:rectangle}
\end{equation}
Hence, whenever the value of one diagonal is already certified, the value of the complementary diagonal is determined by the rectangle identity.

In particular, if a diagonal contains an unoccupied cell, its inner product is zero.
The rectangle rule can therefore transfer a certified zero or one relation from one diagonal to the other.

\paragraph{Complementary-pair rule.}
A special case occurs when both diagonals of a genuine rectangle are selected two-edges.
The rectangle identity then certifies the corresponding pair identifications simultaneously.
This rule is useful in configurations in which two selected pairs interact through the same rectangle.

The preceding rules generate a finite system of certified relations.
We write
$
p\sim q
$
when the closure rules certify
$
\langle v_p,v_q\rangle=1,
$
so that the two cell vectors represent the same edge element.
Likewise, we write
$
p\perp_R q
$
when the closure rules certify
$
\langle v_p,v_q\rangle=0.
$

The subscript $R$ emphasizes that the relation is obtained through the recursive rectangle-closure procedure rather than by an independent numerical calculation.

More generally, after selected pairs have been identified, the same notation is used for the corresponding edge representatives.
Thus, an assertion that two distinct edge elements are \emph{certified orthogonal} means that their orthogonality has been obtained from the line and rectangle closure rules.

The closure process is purely algebraic and finite at each fixed configuration.
In particular, we do not regard an ungrounded odd cycle of equations as an admissible certificate, nor do we replace the certificate argument by a numerical Gram-matrix test.
All orthogonality assertions used below are to be understood in this recursive certificate sense.

\subsection{Recursive-Line Admissibility}
\label{sec:rw3}

The strengthened recursive-line condition requires that the closure process resolve all selected pairs while preserving the distinction between different edge elements.

\begin{definition}[$\text{RW3}^+$ ]
\label{def:rw3plus}
A simple augmented configuration satisfies $\text{RW3}^+$  if the following three requirements hold under the recursive closure rules:

\begin{enumerate}
\item \textbf{Identification.}
For every selected two-edge
\(    b_j=\{p,q\}\in E_2,
   \)
the relation
\(    \langle v_p,v_q\rangle=1
   \)
is certified.
Thus the two halves of every selected pair are identified with a single edge representative.
\item \textbf{Equivalence preservation.}
Distinct selected two-edges remain in distinct equivalence classes under the certified identifications.
In particular, the closure process is not allowed to merge two different selected edges.

\item \textbf{Pairwise orthogonality.}
Any two distinct edge representatives have certified orthogonal representatives.
Equivalently, after all selected pairs have been resolved, the corresponding distinct representatives satisfy the recursively certified zero relations.

\end{enumerate}
\end{definition}

$\text{RW3}^+$  is the admissibility criterion that will be verified for the seed configuration and for every finite stage of the recursive extension.

The role of the present section is only to establish the closure framework.
The explicit seed and the recursive extension constructed later provide the particular configurations to which these rules are applied.
In particular, the path arguments used to propagate certificate relations over arbitrarily many inserted rows are deferred to the later sections.

\subsection{The Five-Column Cell Bound}
\label{sec:cell_bound}

We finally recall the numerical bound that will be used to match the constructive lower bound.

For $m\ge 10$, the ordinary five-column Zarankiewicz number satisfies

$$
z(m,5)=m+10.
$$

Moreover, every simple limited configuration satisfies
\begin{equation}
|E_1|+|E_2|
=3m+5-\frac{H}{2},
\qquad
H\equiv0\pmod 2.
\label{eq:count}
\end{equation}
Consequently,

$$
z_{RL}(m,5)\le z_2(m,5)\le 3m+5.
$$

The identity \eqref{eq:count}
 also shows that equality with $3m+5$ forces

$$
H=0.
$$

Thus, once a hole-free admissible configuration with $3m+5$ augmented edges is constructed, the upper bound is attained.

The remainder of the paper is devoted to constructing such a configuration.
Section~\ref{sec:extension_scheme}
 gives the $12\times5$ seed and the recursive extension scheme, while the subsequent sections establish the required recursive closure relations for arbitrary extension length.

\section{Base Seed $K_0$ and Recursive Double-Path Extension}
\label{sec:extension_scheme}

\subsection{The Base Seed $K_0$}
\label{sec:base_seed}
To establish the constructive lower bound for $z_{RL}(m,5)$, we now explicitly construct a base seed configuration from which larger configurations can be recursively generated. As illustrated in the matrix representation below, a black dot ($\bullet$) denotes a distinct one-edge, while two occurrences of an integer $j$ denote the two halves of the selected two-edge $b_j$, with no holes present:

\begin{equation}\label{eq:seed}
 K_0=\begin{array}{c|ccccc}
 &1&2&3&4&5\\\hline
1 & \bullet & \bullet & 4 & 3 & 4 \\
2 & \bullet & 5 & \bullet & 13 & 6 \\
3 & \bullet & 13 & 19 & \bullet & 1 \\
4 & \bullet & 7 & 18 & 7 & \bullet \\
5 & 2 & \bullet & \bullet & 2 & 1 \\
6 & 18 & \bullet & 17 & \bullet & 19 \\
7 & 15 & \bullet & 16 & 17 & \bullet \\
8 & 12 & 6 & \bullet & \bullet & 5 \\
9 & 14 & 15 & \bullet & 3 & \bullet \\
10 & 11 & 8 & 9 & \bullet & \bullet \\\hline
11 & 10 & 9 & 8 & \bullet & 12 \\
12 & 10 & 14 & 11 & 16 & \bullet \\
\end{array}.
\end{equation}

Specifically, its first ten rows utilize exactly the ten column pairs $\{12, 13, 14, 15, 23, 24, 25, 34, 35, 45\}$, whereas the last two rows place their singletons in columns four and five, respectively. Consequently, we obtain $|E_1| = 22 = z(12, 5)$, $|E_2| = 19$, and $H = 0$. Denoting a one-edge at $(r,c)$ as $a_{r,c}$ and setting $W = a_{12,5}$.

To enable the subsequent recursive extension, the two pairs chosen for extension are

\begin{equation}\label{eq:ports}
 b_{14}=\{(9,1),(12,2)\},\qquad
 b_{16}=\{(7,3),(12,4)\}.
\end{equation}
The useful feature is that their second halves are in the same final row,
while the two paths will use disjoint column pairs $\{1,2\}$ and $\{3,4\}$.
The fifth column is available for a new singleton in every inserted row.

With this foundational configuration established, we now verify its key properties:
\begin{proposition}\label{prop:seed}
The configuration $K_0$ satisfies $\RW$. Consequently $\zrl(12,5)=41$.
\end{proposition}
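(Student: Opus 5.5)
We establish the proposition in two parts. The value $41$ follows from the simplicity of $K_0$ and the cell bound of Section~\ref{sec:cell_bound}. The condition $\RW$ is verified by building an explicit closure certificate for $K_0$.

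\textbf{Step 1: simplicity.} Each integer $1,\dots,19$ occurs exactly twice in \eqref{eq:seed}, so the selected supports are disjoint. Counting bullets gives $|E_1|=22$ and $|E_2|=19$, and no cell is empty, so $H=0$ and $22+38=60=5\cdot 12$, consistent with \eqref{eq:cell_conservation}. For $C_4$-freeness of the one-edge graph, observe that rows $1$--$10$ carry one-edges on the ten distinct column pairs. Rows $11$ and $12$ each carry a single one-edge, and a single one-edge in a row cannot help close a $2\times2$ rectangle. Hence no two rows share two one-edge columns. Then \eqref{eq:count} gives $|E_1|+|E_2|=3\cdot12+5=41$, which is the upper bound. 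So once $\RW$ holds, $\zrl(12,5)=41$.

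\textbf{Step 2: identification.} Pairs lying in a common row or column are certified immediately by the line rule \eqref{eq:line_rule}. These are $b_2$ (row 5), $b_7$ (row 4), $b_4$ (row 1), $b_{10}$ (column 1), $b_{13}$ (column 2), $b_{17}$ (row 7 versus row 6? no: $(6,3),(7,4)$ are diagonal), and so on. I would list all of them first. Each remaining diagonal pair $\{p,q\}$ determines a rectangle whose other diagonal $\{r,s\}$ consists of cells $r,s$ that are one-edges, or halves of different selected edges, lying on lines with already-resolved elements. If $\langle v_r,v_s\rangle=0$ is certified, then \eqref{eq:rectangle} gives $\langle v_p,v_q\rangle=1$. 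The work is to order these certificates so that each uses only relations certified earlier. This is a finite propagation, and I would record it as a table, which is the content of the appendix.

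\textbf{Step 3: preservation and orthogonality.} With all pairs identified, each edge representative is a class of one or two cells. Two distinct representatives are orthogonal if some pair of their cells is certified orthogonal. Pairs sharing a row or column are handled by the line rule. For the rest, I would find a rectangle whose complementary diagonal is already certified, with value $0$ or $1$, and solve \eqref{eq:rectangle}. Preservation of distinct classes follows once every pair of distinct selected edges has a certified $0$, since $0\ne1$ prevents any merging.

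\textbf{Main obstacle.} The real difficulty is the bootstrapping order in Steps 2 and 3. Since $H=0$, there are no free zero diagonals, so every rectangle certificate must ultimately be grounded in line-rule relations. Moreover, some pairs, for example those involving the interface rows $11$ and $12$ and the ports $b_{14}$ and $b_{16}$, may only be reachable through chains of several rectangles. I would first run the closure systematically, iterating the rules to a fixpoint; this is a finite computation on $60$ cells. I would then check that the fixpoint contains all $19$ identifications and all $\binom{41}{2}$ orthogonalities, and extract explicit certificate paths for the record.
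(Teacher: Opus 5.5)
\textbf{Gap: the closure is never carried out.} Your Step 1 is correct, and the shape of Steps 2--3 (identify, then certify orthogonality in a grounded order) matches the paper. But for this proposition the grounded ordering \emph{is} the proof, and you never produce it. You assert that each remaining pair has a companion that can be grounded and that a valid order exists. You then defer to a fixpoint computation whose outcome you do not report.

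Since $H=0$, there are no free zero diagonals, so success is not automatic. For instance, the paper's certificate for the companion $[v_{7,4},b_{11}]$ of $b_{16}$ is a chain of 18 rectangles. That chain switches between the two halves of each of $b_{11},\dots,b_{15}$, so it is valid only after those pairs have been resolved.

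The paper closes this by exhibiting everything explicitly:
\begin{itemize}
\item the resolution order $b_{11},b_{12},b_{14},b_{15},b_{13},b_{16},\dots$, with a path for every non-line pair;
\item the 56 non-line two-edge/two-edge chains;
\item the 147 one-step and 61 longer mixed chains, each grounded in a line relation or in the two-edge table;
\item a uniform argument for the 231 one-edge/one-edge pairs.
\end{itemize}
That last argument runs as follows: by $C_4$-freeness, the companion of two non-collinear one-edges contains a two-edge half, so one rectangle reduces the pair to an already certified relation. You need either these certificates or the verified output of your closure run.

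\textbf{The Step 2 mechanism would stall immediately.} The only rectangle containing $b_5=\{(2,2),(8,5)\}$ has rows $2,8$ and columns $2,5$. Its other diagonal $\{(2,5),(8,2)\}$ is $b_6$: both halves of a single selected edge, not one-edges or halves of different edges as you claim. The same happens for $b_8,b_9$ in rows $10,11$, columns $2,3$. In these rectangles the identity reads
\[
\langle v_{2,2},v_{8,5}\rangle+\langle v_{2,5},v_{8,2}\rangle=2,
\]
so no companion can ever be certified to be $0$. You need the complementary-pair rule: each inner product of unit vectors is at most $1$, so both terms equal $1$.

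These four identifications feed everything that follows. The companion of $b_{11}$ is $[b_9,b_{10}]$, and that of $b_{12}$ is $[b_5,b_{10}]$. Without the rule, the cascade never starts.

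\textbf{Errors in your line list.} The line-resolved pairs are exactly $b_1,b_2,b_3,b_4,b_7,b_{10}$.
\begin{itemize}
\item You list $b_{13}$ as a column pair, but $b_{13}=\{(2,4),(3,2)\}$ shares no line. The paper resolves it by a three-rectangle chain ending at $[b_1,b_{12}]$, hence only after $b_{12}$. Treating it as a line pair would make your certificate invalid.
\item You omit $b_1$ (column $5$) and $b_3$ (column $4$).
\end{itemize}
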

\begin{proof}
The line rule resolves
$b_1,b_2,b_3,b_4,b_7,$ and $b_{10}$.
The complementary-pair rule resolves $b_5,b_6$ using the
rectangle formed by rows 2 and 8 and columns 2 and 5, and
resolves $b_8,b_9$ using the rectangle formed by rows 10 and 11
and columns 2 and 3.
The remaining pairs are resolved sequentially in the order
\begin{equation}\label{eq:seedorder}
b_{11},b_{12},b_{14},b_{15},b_{13},b_{16},b_{17},b_{18},b_{19}.
\end{equation}
Appendix~\ref{app:seedres} specifies the companion diagonal and a complete
rectangle path for each of these pairs.
The corresponding paths have lengths
$0,2,2,0,3,18,0,1,$ and $7$, respectively.
Each path terminates at a line relation that has already been
established at that stage.
For unresolved pairs, the two cell vectors are kept distinct;
no equality between them is assumed until it has been established.

After the resolution step, 41 edge labels remain. Appendix~\ref{app:seedorth} verifies all
\(\binom{19}{2}=171\) pairs of two-edge labels. Of these, 115 can be established
directly from a line relation, while the remaining 56 are established by the
rectangle paths listed in Appendix~\ref{app:seedorth}. There are also \(22\cdot19=418\) mixed pairs. Among them, 210 can be established
directly from a line relation, 147 require a single transfer, and the remaining
61 are verified by the paths listed in Appendix~\ref{app:seedorth}, each of length at
most four.

In every case, the corresponding path starts from a relation that has already
been established at that stage—either a line relation, a resolved diagonal, or
a previously verified two-edge relation. Thus, each deduction is based only on
relations established earlier in the resolution process.

Finally, two one-edges sharing a line are resolved directly by Equation~\eqref{eq:rectangle}. If they do not share a line, their companion diagonal cannot also consist of two one-edges, since the one-edge graph is \(C_4\)-free. Thus, the companion diagonal either corresponds to a resolved selected pair or contains a two-edge representative, in which case it has already been handled above. Equation~\eqref{eq:rectangle} therefore resolves all remaining one-edge pairs.

In total, this covers

$$
171+418+\binom{22}{2}=820
$$
distinct-edge pairs. The identification rules only join the two halves of selected pairs, so distinct edges remain in distinct classes. Hence, the graph is admissible and has 41 edges. The upper bound of $3m+5$ established in Section~\ref{sec:cell_bound} gives the matching upper bound, completing the proof.
\end{proof}

\subsection{The Recursive Double-Path Extension $K_t$}
\label{sec:double_path}

Having constructed the base seed configuration $K_0$, we now introduce a recursive double-path extension to obtain a family of configurations $K_t$ for $m = 12 + t$ ($t \ge 1$). Specifically, set
\[
r_i = 11 + i \quad (1 \le i \le t), \qquad w = 12 + t = m.
\]
Thus, $r_1, \ldots, r_t$ represent the added intermediate rows, while $w$ is the new terminal row.
To construct $K_t$, we retain the first eleven rows of $K_0$ and unbind the two-edges $b_{14}$ and $b_{16}$. Their occupied cells $(9,1)$ and $(7,3)$ are preserved as the starting anchors for the $P$- and $Q$-paths, respectively.

To extend the two opened two-edges one row at a time, we insert $t$ new rows $r_1,\ldots,r_t$ and a terminal row $w$. The two-edges in columns $1$
and $2$ are arranged successively across these rows to form the
$P$-path, while those in columns $3$ and $4$ form a parallel
$Q$-path. The fifth column adds one new
one-edge in each intermediate row. Thus we insert
\begin{equation}\label{eq:blocks}
\begin{array}{c|ccccc}
 &1&2&3&4&5\\\hline
 r_i&P_i&P_{i-1}&Q_i&Q_{i-1}&U_i\quad(1\le i\le t)\\\hline
 w&b_{10}&P_t&b_{11}&Q_t&W
\end{array}.
\end{equation}
Every $U_i$ is a new one-edge, not a paired label. The retained halves of $b_{10}$ and $b_{11}$ in the old last row move to $(w,1)$ and $(w,3)$, respectively, while the old one-edge $W$ moves to $(w,5)$. Denote the resulting configuration by $K_t$.

More explicitly, starting from the marked cells $(9,1)$ and $(7,3)$, define
\begin{align}
 P_0&=\{(9,1),(r_1,2)\},& Q_0&=\{(7,3),(r_1,4)\},\notag\\
 P_i&=\{(r_i,1),(r_{i+1},2)\},&
 Q_i&=\{(r_i,3),(r_{i+1},4)\}&& (1\le i<t),\label{eq:edges}\\
 P_t&=\{(r_t,1),(w,2)\},& Q_t&=\{(r_t,3),(w,4)\}.\notag
\end{align}
Here $P_0,\ldots,P_t$ are the two-edges forming the first path, while $Q_0,\ldots,Q_t$ form the parallel path.

For example, when $t=1$ (so $m=13$), the unchanged first eleven rows are followed by
\[
\begin{array}{c|ccccc}
12&P_1&P_0&Q_1&Q_0&\bullet\\
13&b_{10}&P_1&b_{11}&Q_1&\bullet
\end{array},
\]
while when $t=2$ (so $m=14$), the corresponding rows are
\[
\begin{array}{c|ccccc}
12&P_1&P_0&Q_1&Q_0&\bullet\\
13&P_2&P_1&Q_2&Q_1&\bullet\\
14&b_{10}&P_2&b_{11}&Q_2&\bullet
\end{array}.
\]
Thus, increasing $t$ replaces the terminal row by one new path row and a new terminal row; it is not an independent block appended without modifying the old pairing. For $t=0$, we take $K_t=K_0$, where the two opened two-edges are simply the original pairs $b_{14}$ and $b_{16}$.

\subsection{Structure and Edge Counts}
\label{sec:Counts}
\begin{lemma}\label{lem:counts}
The configuration $K_t$ is simple and has a $C_4$-free one-edge graph, with
\begin{equation}\label{eq:familycount}
 |E_1|=22+t=m+10,\quad |E_2|=19+2t=2m-5,\quad
 H=0,\quad |E_1|+|E_2|=41+3t=3m+5.
\end{equation}
\end{lemma}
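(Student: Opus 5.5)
I will prove the lemma by direct inspection of the rows of $K_t$. First I count cells, then check simplicity, and finally check the $C_4$-freeness of the one-edge graph. The case $t=0$ is the seed itself, already discussed after \eqref{eq:seed}. So I will assume $t\ge 1$.

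\textbf{Counting.} The first eleven rows of $K_0$ are kept unchanged. In $K_0$ the last row contributes one one-edge $W=a_{12,5}$ and four cells carrying halves of $b_{10},b_{14},b_{11},b_{16}$. In $K_t$, the rows $r_1,\dots,r_t$ and $w$ are each completely filled by \eqref{eq:blocks}, so $H=0$. The one-edges are the $21$ one-edges of rows $1$--$11$, plus $U_1,\dots,U_t$, plus $W$, giving $|E_1|=22+t$. For the selected pairs, $b_{14}$ and $b_{16}$ are removed. The pairs $P_0,\dots,P_t$ and $Q_0,\dots,Q_t$ are added, which is $2t+2$ new pairs, while $b_{10},b_{11}$ persist with relocated halves. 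This gives $|E_2|=19-2+2t+2=19+2t$. A consistency check is $|E_1|+2|E_2|=22+t+38+4t=60+5t=5m$, in agreement with \eqref{eq:cell_conservation}. Summing yields $41+3t=3m+5$.

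\textbf{Simplicity.} I will check that every cell carries exactly one label and that each pair in \eqref{eq:edges} occupies exactly two cells. The cells $(9,1)$ and $(7,3)$ are freed from $b_{14},b_{16}$ and reassigned only to $P_0,Q_0$. Each cell $(r_i,1)$ belongs only to $P_i$, each $(r_{i+1},2)$ only to $P_i$ (or $(w,2)$ to $P_t$), and similarly for columns $3,4$ and $Q$. This shows the supports are disjoint.

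\textbf{$C_4$-freeness.} The one-edge graph has columns as one side and rows as the other. In rows $1$--$11$ the one-edge column sets are the ten distinct pairs listed after \eqref{eq:seed}, plus the singleton $\{4\}$ in row 11. Every new row $r_i$ and $w$ has the singleton $\{5\}$. A $C_4$ would require two rows sharing two common one-edge columns. Any row with at most one one-edge cannot participate, so only rows $1$--$10$ matter. Since their pairs are distinct, no $C_4$ arises.

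The only step requiring genuine care is confirming that relocating $b_{10},b_{11}$ to row $w$ and opening $b_{14},b_{16}$ leaves no cell double-counted. I expect this to be a short bookkeeping check rather than a real obstacle.
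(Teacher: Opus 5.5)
Your proposal is correct and follows essentially the same route as the paper. The paper likewise checks disjoint supports for simplicity, uses the ten distinct column pairs in rows 1--10 together with singleton rows elsewhere for $C_4$-freeness, and counts $-2+2(t+1)$ two-edges, $t$ new one-edges and fully occupied new rows. Your cell-conservation cross-check $|E_1|+2|E_2|=5m$ is a harmless extra confirmation.
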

\begin{proof}

Every repeated label in~\eqref{eq:edges} occurs in exactly two distinct cells, and these occupied cells are disjoint from those of all other selected edges. Hence $K_t$ is simple.

To verify that the one-edge graph remains $C_4$-free, note that the original
ten column-pair rows are unchanged; every other row has exactly one
one-edge, and each newly inserted one-edge $U_i$ lies in column~5.
Thus no new one-edge $C_4$ occurs, and the one-edge graph remains
$C_4$-free.

The two removed pairs are replaced by $2(t+1)$ pairs, giving a net increase of
$2t$ two-edges, while the $t$ newly inserted rows contribute exactly $t$
one-edges. Since $K_0$ has $H=0$ and all five cells of every newly inserted
row are occupied, we also have $H=0$ for $K_t$. Hence

$$
|E_1|=22+t=m+10,\qquad
|E_2|=19+2t=2m-5,
$$

and

$$
|E_1|+|E_2|=41+3t=3m+5,
$$
matching the capacity upper bound stablished in Section~\ref{sec:cell_bound}
.
\end{proof}

\section{Uniform Resolution and Orthogonality of  Extended Paths}\label{sec:resolution_Orthogonality}
The construction and its edge counts are now fixed. It remains to verify
that the resulting configuration satisfies the strengthened closure
condition $(RW3+)$. We proceed in three steps: first, we resolve all
selected two-edges; second, we establish a distance-reducing lemma for
the two extended paths; finally, we derive the required orthogonality
relations within the extended part.
\subsection{Resolution of the Selected Pairs}\label{sec:resolution}

\begin{lemma}\label{lem:resolve}
Every selected two-edge of $K_t$ is resolved by the strengthened closure.
\end{lemma}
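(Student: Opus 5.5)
The plan is to split the selected two-edges of $K_t$ into two groups: the seed pairs $b_j$ with $j\notin\{14,16\}$, and the path pairs $P_0,\dots,P_t$, $Q_0,\dots,Q_t$. For the seed pairs I reuse the certificates of Proposition~\ref{prop:seed}. The first eleven rows are unchanged, so every certificate path for $b_1,\dots,b_{19}$ (other than $b_{14},b_{16}$) that uses only rows $1$--$11$ carries over verbatim. The line and complementary-pair resolutions of $b_1,\dots,b_{10}$ are of this type, except $b_{10}$ and $b_{11}$, whose second halves now sit in row $w$ instead of row $12$. Since row $w$ has the same column pattern at columns $1,3,5$ as the old row $12$ (namely $b_{10}$, $b_{11}$, $W$), any certificate in Appendix~\ref{app:seedres} that touches row $12$ only through columns $1,3,5$ transfers by relabelling $12\mapsto w$. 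I will check this for $b_{10},b_{11},b_{12}$ and the later pairs in the order~\eqref{eq:seedorder}, with the $b_{14}$ and $b_{16}$ steps removed.

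The serious issue is any seed certificate that passed through columns $2$ or $4$ of row $12$, or through $b_{14}$, $b_{16}$ themselves. In the seed order, $b_{14}$ precedes $b_{15}$, $b_{13}$ and $b_{17},b_{18},b_{19}$, and $b_{16}$ precedes $b_{17},b_{18},b_{19}$. Wherever a seed certificate used the resolved relation $(9,1)\sim(12,2)$ or $(7,3)\sim(12,4)$, I will replace it by the chain $(9,1)\sim(r_1,2)$, $(r_1,1)\sim(r_2,2)$, $\dots$, $(r_t,1)\sim(w,2)$. This requires the path pairs to be resolved first, or at least interleaved at the same stage.

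For the path pairs I resolve along the path from the terminal end. Take $P_t=\{(r_t,1),(w,2)\}$ and the rectangle on rows $r_t,w$ and columns $1,2$. Its companion diagonal is $\{(r_t,2),(w,1)\}=\{P_{t-1}\text{-half},b_{10}\text{-half}\}$, and its value is certified $0$ through a one-step transfer off the line relations in row $w$ and column $1$. Then \eqref{eq:rectangle} gives $\langle v_{(r_t,1)},v_{(w,2)}\rangle=1$. Similarly, the rectangle on rows $r_i,r_{i+1}$ and columns $1,2$ has diagonals $P_i$ and $\{(r_i,2),(r_{i+1},1)\}=\{P_{i-1},P_{i+1}\}$. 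Once $P_{i+1}$ is identified, the companion inner product reduces to $\langle v_{P_{i-1}},v_{P_{i+1}}\rangle$, which is $0$ by induction. This is the distance-reduction idea: the inner product between labels at distance two is moved, using the line rule in column $5$ with $U_i$ and in column $3/4$ with $Q$-cells, to a rectangle whose other diagonal contains a one-edge pair or an already-resolved relation. The $Q$-path is handled identically in columns $3,4$, with $b_{11}$ at $(w,3)$ playing the role of $b_{10}$. Finally, $P_0$ and $Q_0$ are resolved by the rectangles on rows $9,r_1$ (columns $1,2$) and rows $7,r_1$ (columns $3,4$), whose companion diagonals involve seed cells $(9,2)$ and $(7,4)$ and $P_1,Q_1$ cells. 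I will certify these companions with the interface certificates recorded in the appendix.

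The main obstacle is grounding the companion diagonals without circularity and uniformly in $t$. I will first try to show that each companion diagonal $\{(r_i,2),(r_{i+1},1)\}$ is certified $0$ by a rectangle with columns $1,5$ or $2,5$. Such a rectangle contains the one-edges $U_i,U_{i+1}$ in column $5$, and the two $U$'s are certified orthogonal by the line rule in column~$5$. This yields a length-one transfer independent of $i$, which makes the induction uniform. If that fails, I will induct on $t$ and use the preceding stage's certificates as interface data. For $t=1$, the few finite checks near row $w$ and rows $7,9$ will be done explicitly, mirroring Appendix~\ref{app:seedres}.
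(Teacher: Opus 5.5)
Your overall plan is the paper's: carry over the seed certificates for $b_{11},b_{12},b_{13}$ with row $12\mapsto w$, resolve the path pairs backwards from row $w$, then obtain $b_{15},b_{17}$ from $P_0\orth a_{7,2}$ and $Q_0\orth a_{6,4}$. However, the base case, which is where the actual work lies, is missing, and what you say about it is false. For $P_t$, the companion $\{(r_t,2),(w,1)\}$ has no one-step certificate. The cell $(r_t,2)$ belongs to the still unresolved $P_{t-1}$, so the only move other than the target rectangle itself goes through the other half $(11,1)$ of $b_{10}$. That leads to the rectangle on rows $r_t,11$, columns $1,2$, whose other diagonal $\{(r_t,1),(11,2)\}$ lies on no common line. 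A second transfer is needed: rows $r_t,10$, columns $1,3$, grounded by $(r_t,3)$ and $(w,3)$ sharing column $3$. For $Q_t$ the asserted symmetry fails outright. The companion $\{(r_t,4),(w,3)\}$ can only be moved, through the half $(10,1)$ of $b_{11}$, to $\{(r_t,1),(10,4)\}$, which is a half of $P_t$ against $a_{10,4}$. This is stuck unless $P_t$ is resolved first. Even then you need the chain $(w,4)\orth b_{11}$ (row $w$) $\Rightarrow b_{10}\orth a_{10,4}\Rightarrow b_{11}\orth a_{11,4}\Rightarrow (w,4)\orth b_{8}\Rightarrow P_t\orth a_{10,4}$ through rows $10,11,w$. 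This is the paper's nine-line table, and nothing in your proposal produces it.

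Conversely, the step you single out as the main obstacle is immediate, and the mechanism you propose for it cannot work. Once $P_{i+1}$ is resolved, its other half $(r_{i+2},2)$ (or $(w,2)$ if $i+1=t$) shares column $2$ with $(r_i,2)$. So the companion of $P_i$ vanishes by the line rule plus saturation. The same remark handles $P_0$ (through $(9,2)$) and the $Q$-path in column $4$. There is no need for the interface tables of Appendix~\ref{app:interface}, which in any case presuppose that all pairs, including $b_{15}$ and $b_{17}$, are already resolved. Your alternative also fails geometrically. A rectangle in columns $1,5$ or $2,5$ cannot contain the companion diagonal, whose cells lie in columns $1$ and $2$. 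Moreover, $U_i$ and $U_{i+1}$ share column $5$, so they are never the two ends of a diagonal. Finally, the ``chain'' $(9,1)\sim(r_1,2)$, $(r_1,1)\sim(r_2,2),\dots$ is not a valid substitute for $(9,1)\sim(12,2)$. Consecutive links share no cell, and concluding $(9,1)\sim(w,2)$ would merge $P_0$ with $P_t$, violating equivalence preservation. The seed certificates that used $b_{14}$ and $b_{16}$ need only the single line facts $P_0\orth a_{7,2}$ (via $(r_1,2)$) and $Q_0\orth a_{6,4}$ (via $(r_1,4)$).
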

\begin{proof}
The initial line and complementary relations from $K_0$ remain intact because rows 1--11 are unchanged. Although the second half of $b_{10}$ is moved from old row 12 to row $w$, it stays in column 1, so $b_{10}$ remains a column pair.

Resolve
$b_{11},b_{12},b_{13}$ via their paths in Appendix~\ref{app:seedres}, replacing
old row 12 with row $w$. Those paths do not use the identifications of
$b_{14}$ or $b_{16}$.

Let $r=r_t$ and let $v_{i,j}$ denote the vertex (or grid entry) at row $i$ and column $j$. We abbreviate
\[
 D=b_{10},\ L=b_{11},\ X=b_8,\ Y=b_9,\quad
 a=a_{10,4},\ c=a_{11,4},
\]
\[
 \alpha=v_{r,1},\ p=v_{r,2},\ \beta=v_{r,3},\ q=v_{r,4},\
 \gamma=v_{w,4}.
\]
The following nine deductions resolve the terminal $P_t,Q_t$.
In the table below, the middle column specifies the rectangle with row and column coordinates, while the last column gives its companion diagonal along with the justification for its certification. All column coordinates are $1$-based.

\begin{center}\small
\begin{tabular}{lll}
\hline Target & Rows; columns & Certified companion\\\hline
$\alpha\orth Y$ & $r,10;1,3$ & $\beta\orth L$ (common column 3)\\
$p\orth D$ & $r,11;1,2$ & $\alpha\orth Y$\\
$P_t$ resolves & $r,w;1,2$ & $p\orth D$\\
$D\orth a$ & $w,10;1,4$ & $\gamma\orth L$ (row $w$)\\
$L\orth c$ & $10,11;1,4$ & $a\orth D$\\
$\gamma\orth X$ & $w,11;3,4$ & $L\orth c$\\
$P_t\orth a$ & $w,10;2,4$ & $\gamma\orth X$\\
$q\orth L$ & $r,10;1,4$ & $P_t\orth a$\\
$Q_t$ resolves & $r,w;3,4$ & $q\orth L$\\\hline
\end{tabular}\end{center}
Every line is read after the preceding ones. In particular, $\gamma$ is not
treated as the whole label $Q_t$ before the last line.

With the terminal pairs $P_t, Q_t$ established as the induction base, we now resolve $P_{t-1},\ldots,P_0$ in reverse order. For each $i$, its companion diagonal contains one half of the already resolved $P_{i+1}$ and a grid cell in column $2$. Because the other representative half of $P_{i+1}$ lies in column $2$, the companion is certified by the line rule together with saturation. For $P_0$, the first row corresponds to old row $9$, and the same argument applies. The $Q$-path ($Q_{t-1},\ldots,Q_0$) is identical, using column $4$ and old row $7$.

Subsequently, $b_{15}$ is resolved from $P_0\orth a_{7,2}$, and $b_{17}$ from $Q_0\orth a_{6,4}$, both of which are common-column facts. The printed elimination paths for $b_{18}$ and $b_{19}$ apply unchanged, with old row $12$ replaced by $w$.

All selected two-edges are now resolved. Because each identification joined strictly the two halves of a single selected edge, no two distinct edges have merged, which completes the proof.
\end{proof}

\subsection{A Distance-Reducing Path Lemma}\label{sec:path}
The resolution argument above establishes the closure of all selected two-edges. We next turn to the orthogonality relations generated by the extended paths. Since these relations must hold uniformly for paths of arbitrary length, we first formulate a general distance-reducing lemma.

To conveniently track rectangle transfer chains along paths of arbitrary length, we introduce a bracket notation for any resolved edge labels $X$ and $Y$:
\begin{equation}\label{eq:defect}
  [X,Y]=\langle X,Y\rangle-\mathbf1_{\{X=Y\}}.
\end{equation}
Here, $\langle X,Y\rangle $ is the inner product of the representative unit vectors for labels $X$ and $Y$, and $\mathbf1_{\{X=Y\}}$ is the indicator function.

This bracket measures the inner product defect relative to its prescribed target. Consequently, the condition $[X,Y]=0$ unifies both certification goals: it asserts unit norm when $X=Y$, and orthogonality when $X \neq Y$. For a rectangle whose diagonal label pairs are $(X,Y)$ and $(Z,T)$, the rectangle identity simplifies to
\begin{equation}\label{eq:transfer}
  [X,Y]=-[Z,T].
\end{equation}

This notation allows terminal aliases without mistaking a unit vector for one orthogonal to itself.
It is only shorthand for the original certificate rule: one side is used after the other has been
grounded.

\begin{lemma}[Distance-reducing path lemma]\label{lem:path}

Let $s_0,\ldots,s_{t+1}$ be distinct rows, and let $a,b,c$ be distinct columns.
For each $0 \le j \le t$, let $E_j$ denote the representative unit vector of the resolved pair at cells $\{(s_j,a),(s_{j+1},b)\}$.
Let $Y_i = v_{s_i,c}$ be the cell vector at $(s_i,c)$ for $0 \le i \le t+1$.
Then $Y_i \perp_R E_j$ (i.e., $[Y_i, E_j] = 0$) for all $0 \le i \le t+1$ and $0 \le j \le t$.
\end{lemma}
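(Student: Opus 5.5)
The plan is to prove $[Y_i,E_j]=0$ by induction on the signed gap $d=j-i$. Each induction step uses two rectangles from the rectangle rule~\eqref{eq:rectangle}, in the transfer form~\eqref{eq:transfer}. Together they move the index pair $(i,j)$ two steps toward the diagonal. The chain then ends at a line relation, which grounds it.

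\textbf{Base cases.} If $d=0$, the half $(s_i,a)$ of $E_i$ and the cell $(s_i,c)$ lie in the common row $s_i$. They are distinct edge elements, so the line rule~\eqref{eq:line_rule} gives $[Y_i,E_i]=0$. If $d=-1$, i.e.\ $i=j+1$, the half $(s_{j+1},b)$ of $E_j$ and the cell $(s_{j+1},c)=Y_{j+1}$ share row $s_{j+1}$, so again $[Y_{j+1},E_j]=0$. Since the pairs are resolved, both halves of $E_j$ represent the same vector. It therefore suffices to certify the relation against either half.

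\textbf{Reduction step for $d\ge1$.} Here $0\le i<j\le t$. First take the rectangle with rows $s_i,s_j$ and columns $a,c$. Its diagonals are $\{(s_j,a),(s_i,c)\}$, carrying $(E_j,Y_i)$, and $\{(s_i,a),(s_j,c)\}$, carrying $(E_i,Y_j)$. This gives $[Y_i,E_j]=-[Y_j,E_i]$. Next take the rectangle with rows $s_i,s_{j}$ and columns $b,c$, where $j\ge1$ and $i+1\le t+1$. The cell $(s_j,b)$ is the second half of $E_{j-1}$, and $(s_{i+1},b)$ is the second half of $E_i$. Using instead the rectangle on rows $s_{i+1},s_j$, columns $b,c$, the diagonals are $\{(s_{i+1},b),(s_j,c)\}$, carrying $(E_i,Y_j)$, and $\{(s_j,b),(s_{i+1},c)\}$, carrying $(E_{j-1},Y_{i+1})$. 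This gives $[Y_j,E_i]=-[Y_{i+1},E_{j-1}]$. Combining the two,
\[
[Y_i,E_j]=[Y_{i+1},E_{j-1}],
\]
so $d$ drops by $2$ and the indices stay in range. Repeating this reaches either $d=0$ or $d=-1$, both base cases.

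\textbf{Reduction step for $d\le-2$.} Here $i\ge j+2$, and the symmetric pair of rectangles applies: rows $s_j,s_i$ with columns $a,c$, then rows $s_{j+1},s_{i}$ with columns $b,c$. This yields $[Y_i,E_j]=[Y_{i-1},E_{j+1}]$, which raises $d$ by $2$ until it reaches $d=0$ or $d=-1$.

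\textbf{Expected obstacle.} The delicate point is not the algebra but certificate legitimacy. I must check that every rectangle used is genuine, meaning its two rows are distinct; the strict inequalities in each case ensure this. I must also check that no selected pair sits on a diagonal, so that all $\delta$-terms vanish. This holds because $Y$-cells lie in column $c$ and are never halves of the $E$'s. Finally, the deduction must be ordered by increasing $|d|$ (more precisely, by distance to $\{0,-1\}$), so that each transfer starts from an already grounded relation rather than forming an ungrounded cycle.
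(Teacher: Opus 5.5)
Your reduction for $d\ge 2$ is exactly the paper's. The rectangles on rows $s_i,s_j$ (columns $a,c$) and on rows $s_{i+1},s_j$ (columns $b,c$) give $[Y_i,E_j]=-[Y_j,E_i]=[Y_{i+1},E_{j-1}]$. But the certificate bookkeeping, which you rightly call the delicate point, goes wrong in two places.

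First, at $d=1$ the second rectangle has rows $s_{i+1}=s_j$, so it is not a genuine rectangle. This contradicts your claim that the strict inequalities always give distinct rows. You do not need that rectangle here: the first one already gives $[Y_i,E_{i+1}]=-[Y_{i+1},E_i]$, which is your $d=-1$ base case. This is how the paper treats $i=j-1$.

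Second, the $d\le -2$ step is wrong as written. Both rectangles you name have the target pair $(E_j,Y_i)$ on a diagonal, via $(s_j,a)$ and $(s_{j+1},b)$ respectively, each paired with $(s_i,c)$.
\begin{itemize}
\item The rectangle on rows $s_{j+1},s_i$ gives $[Y_i,E_j]=-[Y_{j+1},E_{i-1}]$.
\item The rectangle on rows $s_j,s_i$ gives $[Y_i,E_j]=-[Y_j,\,\cdot\,]$, with the cell $(s_i,a)$ on the other diagonal.
\end{itemize}
Neither involves $Y_{i-1}$ or $E_{j+1}$, so the two cannot compose to $[Y_i,E_j]=[Y_{i-1},E_{j+1}]$.

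Worse, the rectangle on rows $s_j,s_i$ needs $(s_i,a)$ to be a half of $E_i$, and this fails for $i=t+1$. That cell is off the path; in the application it is $b_{10}$ or $b_{11}$ in row $w$. Yet $i=t+1$ is exactly the case that produces the terminal-row anchors such as $b_{11}$ and $W$ in \eqref{eq:Panchors}--\eqref{eq:Qanchors}.

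The repair is the paper's route. First settle all $d\ge 0$. Then, for $d\le -2$, use only the $b$-column rectangle on rows $s_{j+1},s_i$. It gives $[Y_i,E_j]=-[Y_{j+1},E_{i-1}]$, whose gap $(i-1)-(j+1)=|d|-2\ge 0$ is already certified.

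If you want to keep your form $[Y_i,E_j]=[Y_{i-1},E_{j+1}]$, follow that rectangle with the $a$-column rectangle on rows $s_{j+1},s_{i-1}$. This is legitimate only for $d\le -3$; at $d=-2$ the first rectangle already lands on $d=0$, and the second would again be degenerate.
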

\begin{proof}
For each pair $(i,j)$, let $F_{ij} = [Y_i, E_j] = \langle Y_i, E_j \rangle$ (since $Y_i \neq E_j$) denote the inner product defect.
Since $Y_j$ and $E_j$ share the row $s_j$, and $Y_{j+1}$ and $E_j$ share the row $s_{j+1}$,  same-row relations directly establish the base cases $F_{j,j} = F_{j+1,j} = 0$.

Now assume $i < j \le t$. Applying the rectangle transfer identity on rows $s_i, s_j$ and columns $c, a$ yields $F_{ij} = -F_{ji}$.
If $i < j-1$, using the column-$b$ representative of $E_i$ together with the rectangle on rows $s_j, s_{i+1}$ and columns $c, b$ allows a second transfer:
\begin{equation}\label{eq:distance}
  F_{ij} = -F_{ji} = F_{i+1, j-1}.
\end{equation}
Equation~\eqref{eq:distance} reduces the row-index gap $j-i$ by two.
If $i = j-1$, the first transfer immediately hits the same-row zero $F_{i+1,i} = 0$.
Otherwise, the gap $j-i$ decreases by two per iteration, guaranteeing that the reduction terminates at a prescribed zero. Thus, $F_{ij} = 0$ for all $i < j$.

For the remaining cases $i > j+1$ (including $i = t+1$), applying the rectangle transfer on rows $s_i, s_{j+1}$ and columns $c, b$ yields $F_{ij} = -F_{j+1, i-1}$.
Since $j+1 < i-1$, the right-hand side falls under the previously established strictly upper-index case and is thus identically zero.

Traversing each finite transfer chain in reverse explicitly constructs a valid rectangle certificate.
Since all companion diagonal entries lie either on the path itself or in the third column $c$, all target right-hand sides are indeed prescribed zeros, completing the proof.

\end{proof}

\subsection{Orthogonality within the Extended Part}\label{Orthogonality}

Applying the lemma to the $P$-path with row sequence $(9,r_1,\ldots,r_t,w)$ and path columns $(a,b)=(1,2)$, for each $c \in \{3,4,5\}$, and similarly to the $Q$-path with row sequence $(7,r_1,\ldots,r_t,w)$ and path columns $(a,b)=(3,4)$, for each $c \in \{1,2,5\}$, implies that all distinct labels among $P_j, Q_j, U_i, W$ are pairwise orthogonal (pairs of $U$-labels are orthogonal as they share column five).

Furthermore, these applications yield the following old-label anchors, uniformly for every $0\le j\le t$:
\begin{align}
  P_j &\orth \{b_3, a_{9,3}, a_{9,5}, b_{11}, W\}, \label{eq:Panchors}\\
  Q_j &\orth \{b_{15}, a_{7,2}, a_{7,5}, b_{10}, W\}. \label{eq:Qanchors}
\end{align}
For instance, the first three labels in~\eqref{eq:Panchors} reside in the three non-path columns of old row 9, whereas $b_{11}$ and $W$ belong to the terminal row.

\section{Compatibility with the Fixed Seed and Completion of the Proof}\label{sec:Completion}

The preceding section establishes the required orthogonality relations within the newly introduced path structure, uniformly for paths of arbitrary length. It remains to verify that this new structure is compatible with the fixed part inherited from the seed configuration. In this section, we first establish the required relations between the path labels \(P_i,Q_i\) and the labels in the fixed part. We then use these relations to prove the corresponding orthogonality of the inserted one-edges \(U_i\) with the fixed part.

Let \(\mathcal O\) denote the labels occurring in the first eleven rows and the terminal row of \(K_t\). For \(t\ge1\), \(\mathcal O\) consists of 43 distinct labels: 22 old one-edges, 17 retained old two-edges, and the four ports \(P_0,Q_0,P_t,Q_t\). Throughout this section, “all old labels” refers to the set \(\mathcal O\), including these four ports.

\subsection{Orthogonality between the Extended Paths and the Fixed Seed}
\label{sec:path_seed}

\begin{lemma}\label{lem:interface}
For every \(t\ge1\), every \(0\le i\le t\), and every \(B\in\mathcal O\), the brackets \([P_i,B]\) and \([Q_i,B]\) are certified to be zero. Thus, every path label \(P_i,Q_i\) satisfies the required relation with every label in \(\mathcal O\).
\end{lemma}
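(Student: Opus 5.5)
I will prove Lemma~\ref{lem:interface} by reducing, for each old label $B\in\mathcal O$, the infinitely many brackets $[P_i,B]$ (and $[Q_i,B]$) to a finite set of \emph{interface certificates} that do not depend on $t$. The reduction mirrors the distance-reducing mechanism of Lemma~\ref{lem:path}, now with the third column replaced by a fixed old cell.

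\textbf{Step 1: the transfer along the path.} Fix $B$ occupying a cell $(s,c)$ in the first eleven rows. Since $P_i$ has halves $(r_i,1)$ and $(r_{i+1},2)$, with $r_0=9$ and $r_{t+1}=w$, the rectangle on rows $s,r_i$ and columns $c,1$ gives
\[
[P_i,B]=-[v_{r_i,c},v_{s,1}].
\]
Similarly, the rectangle on rows $s,r_{i+1}$ and columns $c,2$ gives $[P_i,B]=-[v_{r_{i+1},c},v_{s,2}]$. For $c\in\{3,4,5\}$ the cell $v_{r_i,c}$ is $Q_i$, $Q_{i-1}$ or $U_i$. Chaining the two transfers therefore links $[P_i,B]$ to $[P_{i\pm1},B']$ or $[Q_{i\pm1},B']$ for old labels $B'$ in row $s$. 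I will organise these links into a finite directed system on the set
\[
\{(\text{path type},\,B)\}\times\{\text{interior},\,\text{near }0,\,\text{near }t\},
\]
showing that each interior instance reduces either to a shifted instance or to one already certified. For $c\in\{3,4,5\}$ and $s=9$ this is immediate from the anchors \eqref{eq:Panchors}, \eqref{eq:Qanchors}. For $c\in\{1,2\}$ I use the $Q$-path transfers instead, which are symmetric.

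\textbf{Step 2: base cases.} At the two ends of the path, $i=0$ and $i=t$, the chains terminate in brackets involving the ports $P_0,Q_0,P_t,Q_t$ and labels of rows $7,9,w$. These ends behave like the seed: $P_0,Q_0$ occupy the cells of $b_{14},b_{16}$ in rows $9,7$, and $P_t,Q_t$ occupy those cells in row $w$ (formerly row~12). The Appendix seed certificates for $b_{14},b_{16}$ versus every old label therefore transfer verbatim, provided they never use a rectangle through old row~12 at a cell whose content changed. Any certificate that does use such a rectangle must be re-derived as an interface certificate for the $t\ge1$ geometry. These finitely many certificates are independent of $t$, because the only cells involved are the port cells and rows $r_1$ and $r_t$, whose contents are uniform.

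\textbf{Step 3: induction.} With the base cases in hand, I will induct on the distance of $i$ from the nearest end, as in \eqref{eq:distance}, so that every transfer decreases this distance. This yields $[P_i,B]=[Q_i,B]=0$ for all $0\le i\le t$. The cases where $B$ is itself a port, for example $[P_i,P_t]$, are already covered by Lemma~\ref{lem:path} and the within-path orthogonality of Section~\ref{Orthogonality}.

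\textbf{Main obstacle.} The hard part is the bookkeeping in Step~1. I have to ensure that the mixed $P$/$Q$ chains actually decrease a well-founded measure rather than cycling, since ungrounded cycles are not admissible certificates. I also have to check that each of the 43 labels, against both paths, lands in a finite, $t$-independent list of end certificates. My first attempt will be to handle all $B$ in a single row $s$ together, with the measure $\min(i,t-i)$, and to verify the terminal cases by enumerating old rows $1$--$11$.
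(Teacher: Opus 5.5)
There is a genuine gap, and it sits at the core of your plan: the induction on $\min(i,t-i)$ in Steps~1 and~3 has nothing to propagate along.

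Fix an old row $s$ with cells $A_1,\dots,A_5$, and use only rectangles on row $s$ and one path row, as you propose. Take $1\le j<t$ and set $r_{t+1}=w$. Four rectangles are relevant:
rows $s,r_j$ with columns $1,3$;
rows $s,r_{j+1}$ with columns $1,4$;
rows $s,r_{j+2}$ with columns $2,4$;
rows $s,r_{j+1}$ with columns $2,3$.
They give
\[
[A_3,P_j]=-[A_1,Q_j]=[A_4,P_{j+1}]=-[A_2,Q_{j+1}]=[A_3,P_j].
\]
Every $P_j$ and $Q_j$ has exactly two cells, so these are the only row-$s$ rectangles touching these four brackets. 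Likewise $[A_5,P_j]$ is linked only to $[A_1,U_j]$ and $[A_2,U_{j+1}]$.

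So the row-$s$ relation graph splits into closed $4$-cycles and short paths, each involving just two consecutive indices. Chains starting from the ends never get past index $1$ or $t-1$, and the $4$-cycles are exactly the ungrounded cycles you were worried about. The column-$5$ branches are worse. They end at $[U_i,B']$, and for $B'$ with no cell in column~$5$ this is only available via Lemma~\ref{lem:Uinterface}, which is itself derived from the present lemma.

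The missing idea is local grounding rather than propagation. An old two-edge with a half in column $1$ or $2$ (resp.\ $3$ or $4$) is orthogonal to every $P_j$ (resp.\ $Q_j$) by a common column of the full supports (ground $\mathsf C$). For example, for $s=10$ the label $A_1=b_{11}$ has its other half at $(w,3)$. So $[b_{11},Q_j]=0$ grounds the cycle above for all $j$ at once (compare entry I1). The paper combines such grounds with the anchors \eqref{eq:Panchors}--\eqref{eq:Qanchors} and with short chains through several different old rows. This certifies every interior target using only the old rows and $r_{i-1},r_i,r_{i+1}$: 57 direct grounds and 29 paths of length at most six, uniformly in $i$ and with no induction on $i$.

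Your treatment of the ends also has problems.
For $t\ge2$, the paper's certificates for $i=0,1$ use the index-$2$ relations (premise $\mathsf E$). The dependency therefore runs from the interior/terminal cases to the prefix, the reverse of your base-cases-first order. The case $t=1$ needs its own table.
The seed certificates for $b_{14},b_{16}$ do not transfer verbatim to both ports. Each seed path uses one specific half, so it certifies at most one of $[P_0,B]$, $[P_t,B]$ (resp.\ $[Q_0,B]$, $[Q_t,B]$).
For $B$ with a cell in column $1$ or $2$, $[P_i,B]=0$ follows immediately from the line rule; no $Q$-path transfer is needed.
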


\begin{proof}
The proof is a finite case analysis in symbolic row positions. Appendix~\ref{app:interface} lists all nontrivial paths; no search or external acceptance assertion is used as a premise. The cases are grounded in identities, shared lines, shared columns of resolved representatives, and Lemma~\ref{lem:path}, including \eqref{eq:Panchors}--\eqref{eq:Qanchors}.

For the interior indices \(2\le i\le t-1\), we use the old rows together with \(r_{i-1},r_i,r_{i+1}\). Appendix~\ref{tab:interior} gives 29 rectangle paths for the non-grounded targets \([P_i,B]\) and \([Q_i,B]\), while the remaining 57 targets are covered by the stated grounds. All paths have length at most six and do not rely on an induction hypothesis for the old labels. The required rows exist throughout this range.

For the terminal index \(i=t\ge2\), Appendix~\ref{tab:tail} uses the old rows together with \(r_{t-1},r_t\). Of the 86 targets, 57 are grounded directly and the remaining 29 are certified by the displayed paths, each of length at most six. This completes the terminal case before treating the prefix indices.

For \(t\ge2\), the prefix indices \(i=0,1\) are handled by Appendix~\ref{tab:prefix}, using the old rows together with \(r_1,r_2\). Of the 172 targets, 114 are grounded directly and 58 are certified by nontrivial paths of length at most seven. One additional ground used here is \([P_2,B]=[Q_2,B]=0\) for \(B\in\mathcal O\). When \(t\ge3\), this follows from the interior case; when \(t=2\), it follows from the terminal case. Thus, the dependency is not circular.

Finally, when \(t=1\), Appendix~\ref{tab:one} handles the two available indices \(i=0,1\) using only the old rows and \(r_1\). It covers all 172 targets, with 114 grounded cases and 58 paths of length at most seven, and requires no premise involving index 2.

The precise row and label substitutions are specified alongside the respective tables. At the smallest permissible indices, certain symbolic expressions may denote the same physical edge (for instance, $P_{i-2}=P_0$ when $i=2$). In such instances, Formula~\eqref{eq:defect} evaluates the actual edge identities, ensuring that these grounds correspond to certified unit values rather than spuriously claimed zero inner products. Crucially, every displayed rectangle maintains strictly distinct rows and columns. Because these four ranges exhaustively cover every $t\ge1$ and all path indices, the lemma holds for arbitrary path lengths rather than merely for tested finite cases.
\end{proof}

\subsection{Orthogonality of the Inserted One-Edges}
\begin{lemma}\label{lem:Uinterface}
Every inserted one-edge $U_i$ is orthogonal to every old label in $\mathcal O$.
\end{lemma}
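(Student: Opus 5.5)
The plan is to certify each bracket $[U_i,B]$ with $B\in\mathcal O$ by a single rectangle transfer whose companion diagonal is already grounded. The ingredients are Lemma~\ref{lem:interface} (the path labels against $\mathcal O$), Lemma~\ref{lem:path} (the path labels against column~5), and the line rule. The cell $U_i$ sits at $(r_i,5)$. Every other cell of row $r_i$ is a half of a path label: $P_i$ at $(r_i,1)$, $P_{i-1}$ at $(r_i,2)$, $Q_i$ at $(r_i,3)$ and $Q_{i-1}$ at $(r_i,4)$. So for an old label $B$ with a representative cell $(s,c)$, $c\neq5$, in an old row $s$, I take the rectangle on rows $r_i,s$ and columns $c,5$. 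Its diagonals are $\{(r_i,5),(s,c)\}=(U_i,B)$ and $\{(r_i,c),(s,5)\}$. The first entry of the companion is one of $P_i,P_{i-1},Q_i,Q_{i-1}$, and the second is the old label $B'$ at $(s,5)$, which lies in $\mathcal O$ because $s$ is an old row. By Lemma~\ref{lem:interface} we have $[\text{path label},B']=0$, so \eqref{eq:transfer} gives $[U_i,B]=0$.

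I will split the old labels into three cases.

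\emph{Case 1: $B$ has a cell in column~5.} This covers $W$ and every old label occupying column~5 of rows 1--11. Here $U_i\orth B$ follows directly from the common-column line rule.

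\emph{Case 2: $B$ is a one-edge or a retained two-edge not in column~5.} The transfer above applies with any representative cell of $B$. For retained two-edges I can use either half, because both are resolved by Lemma~\ref{lem:resolve}.

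\emph{Case 3: $B$ is one of the ports $P_0,Q_0,P_t,Q_t$.} This follows from Lemma~\ref{lem:path} with $c=5$: the $Y$'s in column~5 along the path rows include $v_{r_i,5}=U_i$, so $U_i\orth P_j$ and $U_i\orth Q_j$ for all $j$. This is already recorded in Section~\ref{Orthogonality}.

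The one subtlety is that the companion might coincide with the target label itself, that is, $B'$ might equal the path label. This could happen only if the path label had a cell in row $s$ and column~5, but column~5 contains no path halves. It also fails at the terminal row: there the path cell would be $(r_i,c)$ and $B'=W$, which is distinct and orthogonal by \eqref{eq:Panchors}--\eqref{eq:Qanchors}. So every transfer has a grounded zero companion, and the defect notation \eqref{eq:defect} is never evaluated at an equal pair.

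I expect the main obstacle to be bookkeeping rather than anything conceptual. For each of the 43 labels in $\mathcal O$ I must exhibit a representative cell in an old row $s\ne r_i$ and check that the companion at $(s,5)$ has been certified in Lemma~\ref{lem:interface}. I must also handle the degenerate case $t=1$ and the case $i=t$, where the path labels in row $r_i$ are $P_t,Q_t$, which are themselves ports in $\mathcal O$. If any label turns out to lack a convenient representative, I would route through the terminal row $w$ instead, using the companion $\{(r_i,c),(w,5)\}$ with $W$ and the anchors \eqref{eq:Panchors}--\eqref{eq:Qanchors}.
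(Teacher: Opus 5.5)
Your proposal is correct and follows the paper's argument. Column-5 old labels are handled by the common-column rule. Every other old label is handled by one rectangle on rows $r_i,s$ and columns $c,5$, whose companion pairs a path label in row $r_i$ with an old label at $(s,5)$ and is grounded by Lemma~\ref{lem:interface}. Treating the ports directly via Lemma~\ref{lem:path} with $c=5$, and requiring an old-row representative $s\neq r_i$, is a minor tidy-up rather than a different route: it avoids the degenerate choice of a representative of $P_t$ or $Q_t$ in row $r_t$, which the paper leaves implicit.
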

\begin{proof}
Consider an old label $B \in \mathcal O$ with representative $(s,c)$. If $c=5$, the assertion follows immediately from the same-column rule, as $U_i$ resides at $(r_i,5)$.

If $c \ne 5$, consider the rectangle formed by rows $r_i, s$ and columns $5, c$. Its companion entries are $(r_i,c)$ and $(s,5)$. The entry at $(r_i,c)$ corresponds to one of the path labels $P_i, P_{i-1}, Q_i, Q_{i-1}$, whereas $(s,5)$ represents an old label. By Lemma~\ref{lem:interface}, the inner product of these two companion entries is certified to be zero. A rectangle transfer then yields $[U_i, B] = 0$. Finally, as $U_i$ is a newly selected edge, it remains distinct from every old label, completing the proof.
\end{proof}

\subsection{Preservation of the Seed Relations}\label{sub:preservation}

\begin{lemma}[Simulation of the old zero relations]\label{lem:simulation}
Following Lemmas~\ref{lem:resolve}--\ref{lem:Uinterface}, every pair of distinct labels in $\mathcal O$ possesses certified orthogonal representatives.
\end{lemma}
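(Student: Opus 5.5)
The plan is to simulate, inside $K_t$, every certificate that Proposition~\ref{prop:seed} used for $K_0$. We split the pairs of distinct labels in $\mathcal O$ into two families. The first family consists of pairs of old labels other than the four ports, i.e.\ the 22 old one-edges and the 17 retained two-edges. The second family consists of pairs involving a port $P_0,Q_0,P_t,Q_t$. The second family is already settled by Lemma~\ref{lem:interface}, applied with $i\in\{0,t\}$, together with the symmetry of the bracket. It therefore suffices to treat pairs of retained old labels.

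For such a pair $\{A,B\}$ we take the certificate path recorded in Appendix~\ref{app:seedorth}, or the line relation used there, and transport it to $K_t$ under the row map that fixes rows $1$--$11$ and sends old row $12$ to $w$. Rows $1$--$11$ are unchanged, and in row $w$ the cells of columns $1,3,5$ carry the same labels $b_{10},b_{11},W$ as old row $12$ did. Consequently every rectangle of the old path that avoids the cells $(12,2)$ and $(12,4)$ maps to a genuine rectangle in $K_t$ with the same diagonal labels. By Lemma~\ref{lem:resolve}, the relevant pairs are resolved in the new configuration, with the same relations as before.

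The only cells whose labels change are $(9,1)$, $(12,2)$, $(7,3)$ and $(12,4)$. In $K_0$ these carried $b_{14}$ and $b_{16}$. In $K_t$ they carry $P_0$ at $(9,1)$, $Q_0$ at $(7,3)$, $P_t$ at $(w,2)$ and $Q_t$ at $(w,4)$. Whenever an old path passes through one of these cells, the companion relation it needs is of the form $[b_{14},C]=0$ or $[b_{16},C]=0$ with $C$ an old label. In the transported path this becomes $[P_0,C]$, $[P_t,C]$, $[Q_0,C]$ or $[Q_t,C]$. Each of these is certified by Lemma~\ref{lem:interface}, or by Lemma~\ref{lem:path} through \eqref{eq:Panchors}--\eqref{eq:Qanchors}. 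Thus every old step can be replaced by a certified step in $K_t$, with the port relation substituted as a ground rather than derived.

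Old one-edge pairs are handled by the argument at the end of Proposition~\ref{prop:seed}. For two one-edges on a common line, the line rule applies. Otherwise, by $C_4$-freeness, the companion diagonal contains a resolved two-edge representative, and its relation has already been certified above.

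The main obstacle is the identification step. Where the old path used $b_{14}$ or $b_{16}$ as a single resolved label, one cell now carries $P_0$ while the other carries $P_t$, which is a different label when $t\ge1$. A naive transport would therefore implicitly assert $P_0\sim P_t$, and that is false. The fix is to ensure that each step is read cellwise, as in Lemma~\ref{lem:interface}. A companion diagonal at $(9,1)$ then uses only $P_0$-relations, and one at $(w,2)$ uses only $P_t$-relations. Both kinds are supplied directly by Lemma~\ref{lem:interface}, so the argument never needs to link the two ends of the path.

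We would also check that no old path used $b_{14}$ or $b_{16}$ on both diagonals of the same rectangle. If one did, we would instead route it through \eqref{eq:Panchors}. Finally, distinctness of the resulting classes is inherited from Lemma~\ref{lem:resolve}.
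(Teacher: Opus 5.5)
Your proposal is correct and follows essentially the same route as the paper: you transport the seed derivations under the row map $12\mapsto w$, and wherever a rectangle touches one of the four split cells you read it cellwise and ground the resulting port bracket by Lemma~\ref{lem:interface}. This includes $[P_0,P_t]$ and $[Q_0,Q_t]$ coming from a former selected diagonal, and it never assumes the old identifications of $b_{14}$ and $b_{16}$. Your closing worry about split labels on both diagonals of one rectangle is moot, because a chain starting from a retained target can always be stopped at the first diagonal that contains a port cell.
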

\begin{proof}
Map row 12 of the old seed configuration to row $w$ while keeping all other rows fixed, so that all old cells remain occupied. The only prescribed pair values on these cells that undergo modification are those associated with $b_{14}$ and $b_{16}$, whose respective halves are now absorbed into the ports $P_0, P_t$ and $Q_0, Q_t$. By the preceding lemmas, these ports are mutually orthogonal and are orthogonal to every other label in $\mathcal O$. All retained old pairs remain identified as before.

We then simulate the original zero derivations of the seed configuration step by step, including those within its resolution argument. Whenever a target or saturation condition involves a split seed label, it is replaced by the corresponding port relation certified in Section~\ref{sec:path_seed}. All other line relations remain invariant. For any rectangle with retained old target labels, its companion entry retains its certified value: if the companion previously corresponded to one of the split selected diagonals, its inner product is now certified as zero; otherwise, it takes the value established in an earlier step of the simulated sequence. Consequently, the rectangle transfer rule remains valid throughout. The old identifications of $b_{14}$ and $b_{16}$ are omitted rather than assumed. This induction on the finite derivation order of the seed configuration establishes all required orthogonality relations among the old labels, completing the proof.
\end{proof}

\subsection{Proof of the Main Theorem}
\begin{samepage}
\begin{theorem}\label{thm:main}
For every integer $m\ge12$,
\begin{equation}\label{eq:main}
 \boxed{\zrl(m,5)=3m+5.}
\end{equation}
The configuration $K_{m-12}$ is an extremal witness with
$|E_1|=m+10$, $|E_2|=2m-5$, and $H=0$.
\end{theorem}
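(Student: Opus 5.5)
The plan is to assemble the theorem from the lemmas already established. There are two inequalities to prove. The upper bound $\zrl(m,5)\le z_2(m,5)\le 3m+5$ comes directly from the cell-conservation identity \eqref{eq:cell_conservation} together with \eqref{eq:count} in Section~\ref{sec:cell_bound}, using $z(m,5)=m+10$ for $m\ge10$, which covers every $m\ge12$. For the lower bound, set $t=m-12\ge0$. When $t=0$, Proposition~\ref{prop:seed} already gives the case $m=12$. When $t\ge1$, we exhibit $K_t$ as an admissible witness.

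For $t\ge1$, the steps are as follows.

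\begin{enumerate}
\item \textbf{Counts and simplicity.} Lemma~\ref{lem:counts} shows that $K_t$ is simple, that its one-edge graph is $C_4$-free, and that $|E_1|=m+10$, $|E_2|=2m-5$, $H=0$, so $|E_1|+|E_2|=3m+5$.
\item \textbf{Identification.} Condition (1) of Definition~\ref{def:rw3plus} is Lemma~\ref{lem:resolve}.
\item \textbf{Equivalence preservation.} Condition (2) follows from the final observation of the same lemma: each certified identification joins only the two halves of one selected pair, so no two distinct selected edges are merged.
\item \textbf{Pairwise orthogonality.} For condition (3), partition the labels of $K_t$ into the old set $\mathcal O$ (43 labels) and the new set $\mathcal N=\{P_i,Q_i:1\le i\le t-1\}\cup\{U_i:1\le i\le t\}$. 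Every unordered pair of distinct labels then falls into exactly one of three classes.
\begin{itemize}
\item \emph{Both labels in $\mathcal N$.} This class is handled in Section~\ref{Orthogonality} by Lemma~\ref{lem:path}, applied to the $P$-path with $c\in\{3,4,5\}$ and to the $Q$-path with $c\in\{1,2,5\}$. Pairs of $U$'s are handled by the common-column rule.
\item \emph{One label in $\mathcal N$ and one in $\mathcal O$.} Lemma~\ref{lem:interface} covers the path labels and Lemma~\ref{lem:Uinterface} covers the $U_i$.
\item \emph{Both labels in $\mathcal O$.} This is Lemma~\ref{lem:simulation}. Pairs among the four ports $P_0,Q_0,P_t,Q_t$ are also covered by the path lemma and Lemma~\ref{lem:interface}.
\end{itemize}
\end{enumerate}

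Since the three classes are exhaustive, every pair of distinct edge representatives is certified orthogonal. Hence $K_t$ satisfies $\RW$, and $\zrl(m,5)\ge 3m+5$.

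The main obstacle is bookkeeping rather than new mathematics. First, the case split must genuinely cover all pairs, including the degenerate small-$t$ coincidences, for example $P_t$ being simultaneously a port and the terminal index of the path. Second, the certificate order must be acyclic. Lemma~\ref{lem:Uinterface} uses Lemma~\ref{lem:interface}, and Lemma~\ref{lem:simulation} uses both. Neither of these feeds back into Lemma~\ref{lem:resolve}, whose resolution paths use only line relations and seed data. I would check this dependency ordering explicitly, so that the whole closure is a finite grounded derivation for each fixed $t$.

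Finally, combining the lower bound with the hierarchy $\zrl\le z_2\le 3m+5$ gives equality throughout, and $K_{m-12}$ is an extremal witness with the stated parameters.
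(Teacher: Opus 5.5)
Your proposal is correct and follows the paper's proof essentially verbatim: Proposition~\ref{prop:seed} handles $m=12$, and for $t\ge1$ Lemmas~\ref{lem:counts}, \ref{lem:resolve}, \ref{lem:path}, \ref{lem:interface}, \ref{lem:Uinterface} and \ref{lem:simulation} cover the three clauses of $\RW$, which is then matched against the cell bound \eqref{eq:count}; your explicit $\mathcal O/\mathcal N$ split only makes the paper's implicit case division visible (pairs $P_i,P_j$ and $Q_i,Q_j$ are also immediate from their shared columns). One small correction to your dependency remark: Lemma~\ref{lem:resolve} uses rectangle transfers as well as line relations, namely the nine-step terminal table and the backward sweep along the paths, but it never invokes the later lemmas, so the ordering you describe is still acyclic.
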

\end{samepage}
\begin{proof}
Proposition~\ref{prop:seed} handles $m=12$. For $m=12+t$, $t\ge1$,
Lemma~\ref{lem:counts} gives simplicity, the required ordinary extremal
one-edge graph, and the edge counts. Lemma~\ref{lem:resolve} identifies every
selected pair without merging distinct selected edges. Lemma~\ref{lem:path}
handles relations within the new part, Lemmas~\ref{lem:interface} and
\ref{lem:Uinterface} handle the interface, and Lemma~\ref{lem:simulation}
handles the old part. Thus all parts of $\RW$ hold.
The lower bound $41+3t=3m+5$ equals the theoretical upper bound.
\end{proof}

\begin{corollary}
Under the same supplied definitions,
$z_2(m,5)=\zrl(m,5)=3m+5$ for $m\ge12$. Every extremal graph for $\zrl(m,5)$
in this range has no holes. In particular,
$\zrl(m+1,5)=\zrl(m,5)+3$ for all $m\ge12$.
\end{corollary}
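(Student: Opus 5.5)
The corollary has three parts: the equality with $z_2$, the absence of holes in every extremal graph, and the one-row increment. I would derive all three from Theorem~\ref{thm:main} together with the hierarchy and cell-count facts recalled in Section~\ref{sec:cell_bound}; no new certificate work is needed.

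\textbf{Step 1: $z_2(m,5)=\zrl(m,5)=3m+5$.} Fix $m\ge12$. The hierarchy quoted in the introduction gives $\zrl(m,5)\le z_2(m,5)$, and Section~\ref{sec:cell_bound} gives the ceiling $z_2(m,5)\le 3m+5$. Theorem~\ref{thm:main} supplies the witness $K_{m-12}$ with $\zrl(m,5)\ge 3m+5$. The chain therefore collapses to
\[
3m+5\le \zrl(m,5)\le z_2(m,5)\le 3m+5,
\]
so all three quantities are equal.

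\textbf{Step 2: every extremal graph is hole-free.} Let $G$ be any configuration attaining $\zrl(m,5)=3m+5$. Being admissible for $\zrl$, it is a simple limited configuration, so $|E_1|=z(m,5)=m+10$. Identity~\eqref{eq:count} applies and gives $3m+5=|E_1|+|E_2|=3m+5-H/2$, hence $H=0$. To justify \eqref{eq:count} explicitly, I would substitute $|E_1|=m+10$ into the cell-conservation identity~\eqref{eq:cell_conservation}. This gives $2|E_2|=4m-10-H$, so $H$ is even and $|E_1|+|E_2|=m+10+2m-5-H/2$, which is \eqref{eq:count}.

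\textbf{Step 3: the increment.} For any $m\ge12$ we also have $m+1\ge12$, so Step 1 applies to both. Subtracting, $\zrl(m+1,5)-\zrl(m,5)=(3(m+1)+5)-(3m+5)=3$.

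\textbf{Main obstacle.} The only delicate point is Step 2. There I must make sure that the ``limited'' hypothesis, namely $|E_1|=z(m,5)$, is part of the definition of $\zrl$ as inherited from~\cite{LQ}; the phrase ``under the same supplied definitions'' indicates this. If it is not part of the definition, then $|E_1|\le m+10$ still holds by $C_4$-freeness. In that case \eqref{eq:cell_conservation} gives
\[
|E_1|+|E_2|=\frac{5m+|E_1|-H}{2}\le \frac{6m+10-H}{2},
\]
and equality with $3m+5$ again forces both $H=0$ and $|E_1|=m+10$. So the hole-free conclusion holds either way.
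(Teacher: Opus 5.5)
Your proposal is correct and follows the paper's own argument. The hierarchy $\zrl(m,5)\le z_2(m,5)\le 3m+5$ together with the witness $K_{m-12}$ collapses the chain, and the hole-free and increment claims then follow from \eqref{eq:count} at equality. Your explicit derivation of \eqref{eq:count} from \eqref{eq:cell_conservation}, and your fallback that uses only $|E_1|\le m+10$, simply spell out what the paper's one-line proof leaves implicit.
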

\begin{proof}
The parameter hierarchy and the cell bound sandwich $z_2$ between two equal
quantities. The whole assertion follows from~\eqref{eq:count} at equality.
It does not assert uniqueness or classify all extremal graphs.
\end{proof}

\section{Scope and reproducibility}\label{sec:repro}
The proof uses a finite seed certificate and four finite, symbolic interface
case tables. They are part of the written proof below. Their verification
is elementary substitution in specific rectangles; the infinite part is the
path lemma and the validity of the symbolic ranges. This should be distinguished
from checking a generator for finitely many numerical values of $m$.
The construction is not claimed to extend an arbitrary admissible seed or an
arbitrary choice of two selected pairs.

The accompanying standard-library Python files implement three separate
checks: the symbolic row-and-label identities, a replay restricted to the
printed proof order, and a full cell-level closure with independent trace
replay. All were run for $12\le m\le40$ and for $m=50,60,80,100$.
For example, the proof-order certificate at $m=12$ contains 889 rule steps and
certifies 820 distinct-edge pairs; at $m=100$ it contains 54,878 steps and
certifies $\binom{305}{2}=46,360$ pairs. These figures describe this particular
certificate, not a minimum certificate length.
\begin{verbatim}
python audit_symbolic_tables.py
python audit_mathematical_proof.py 12 13 14 20 100
python five_column_family.py 100 --verify --output graph_100.json
python verify_all.py --large
\end{verbatim}
No previous large exclusion search (for example at $m=11$) is needed for the
present theorem, and none is claimed to have been rerun here. The new result
is an infinite extension of the previously available finite witnesses, not
a theorem attributed to the source manuscript.

\section{Conclusion}
\label{sec:conclusion}

In this paper, we completely determine the exact values of the 5-column recursive-line Zarankiewicz numbers $z_{\mathrm{RL}}(m, 5)$ and the second-order Zarankiewicz numbers $z_2(m, 5)$ for all $m \ge 12$. We establish the main theorem that $z_{\mathrm{RL}}(m, 5) = z_2(m, 5) = 3m + 5$, proving that all extremal configurations in this range satisfy the zero-hole condition ($H = 0$) and exhibit a stable linear growth relation of $z_{\mathrm{RL}}(m+1, 5) = z_{\mathrm{RL}}(m, 5) + 3$.

The core methodology relies on a novel constructive dual-path recursive extension framework. Beginning with a $12 \times 5$ base seed $K_0$ containing 22 $1$-edges and 19 $2$-edges ($z_{\mathrm{RL}}(12, 5) = 41$), we unbind selected pairs of $2$-edges to generate parallel-extending $P$-paths and $Q$-paths for $m = 12 + t$ ($t \ge 1$). To overcome inner-product defects across arbitrary extension lengths, we propose the Distance-Reduction Path Lemma (Lemma 4.2), which rigorously guarantees the orthogonal compatibility of extended path labels ($P_i, Q_i$) and inserted single edges ($U_i$) under condition $(\mathrm{RW3}^+)$ while maintaining $C_4$-freeness.

Beyond closing the exact-value problem for the 5-column case, this work bridges the gap between finite seed constructions and infinite extremal families. The proposed dual-path extension scheme and distance-reduction algebraic framework offer a generalizable methodology for analyzing density upper bounds and structural orthogonality in higher-order Zarankiewicz problems. Future research may explore adapting this recursive extension paradigm to smaller values of $m$ or extending it to higher-dimensional recursive-line systems.

\clearpage
\appendix
\section{Reading the complete rectangle tables}\label{app:notation}
The tables record elementary derivations, not unexplained acceptance flags.
For rows $r\ne s$ and columns $c\ne d$, let $\rho_{r,s}^{cd}$ mean
\emph{exchange the two diagonals of that rectangle}. For a current diagonal
label pair $(X,Y)$ it replaces $[X,Y]$ by the negative of the companion
bracket. A sequence is read from the target to the terminal pair and then
used backwards as a proof from the terminal ground. Repeated use of already
resolved representatives is saturation. At the resolution stage, the same
notation means the cell defect
$\langle v_p,v_q\rangle-\dd(p,q)$, not a presumption that an unresolved pair
has prescribed value zero.

A two-digit superscript such as $35$ means columns 3 and 5. Subscripts are
the two row names. The terminal grounds are:
\begin{center}\begin{tabular}{cl}
$\mathsf L$&Same row or column, allowing resolved representatives.\\
$\mathsf I$&Identity of a resolved label, hence its unit norm.\\
$\mathsf C$&A common column of the full resolved supports.\\
$\mathsf H$&The path lemma, including the listed old-row anchors.\\
$\mathsf B$&A two-edge/two-edge relation already proved in Appendix~\ref{app:seedorth}.\\
$\mathsf E$&The index-2 old-label relations, proved before the prefix table.
\end{tabular}\end{center}
For example, the sequence $\rho_{r,s}^{cd},\rho_{p,q}^{ab}$ asserts
$[X,Y]=-[Z,T]=[F,G]$, with both intermediate pairs determined from the
specified grids. If $[F,G]$ is grounded, the target is certified by two
ordinary transfers. All row/column data are printed, so neither a search
routine nor a hidden choice of representatives is required to check a line.

\section{The seed resolution paths}\label{app:seedres}
At each stage, $v_{r,c}$ denotes a half not yet identified. The order is
\eqref{eq:seedorder}; initially $b_1,\ldots,b_{10}$ are resolved. The paths
below certify the companion before the selected pair is resolved.
\paragraph{Resolve $b_{11}$.} Its halves are $(10,1)$, $(12,3)$. The companion is $[b_{9},b_{10}]$.
\begingroup
\footnotesize\setlength{\tabcolsep}{3pt}\renewcommand{\arraystretch}{1.23}
\begin{longtable}{@{}p{.195\textwidth}p{.55\textwidth}p{.225\textwidth}@{}}
\caption{Companion certificate for $b_{11}$.}\\
\hline Target & Successive rectangles & Terminal and ground \\\hline
\endfirsthead
\multicolumn{3}{l}{\small\itshape Continued: Companion certificate for $b_{11}$.}\\
\hline Target & Successive rectangles & Terminal and ground \\\hline
\endhead
\hline\endfoot
1. $[b_{10},b_{9}]$ & $\text{none}$ & $[b_{10},b_{9}]\quad\mathsf{L}$ \\
\end{longtable}\endgroup
\paragraph{Resolve $b_{12}$.} Its halves are $(8,1)$, $(11,5)$. The companion is $[b_{5},b_{10}]$.
\begingroup
\footnotesize\setlength{\tabcolsep}{3pt}\renewcommand{\arraystretch}{1.23}
\begin{longtable}{@{}p{.195\textwidth}p{.55\textwidth}p{.225\textwidth}@{}}
\caption{Companion certificate for $b_{12}$.}\\
\hline Target & Successive rectangles & Terminal and ground \\\hline
\endfirsthead
\multicolumn{3}{l}{\small\itshape Continued: Companion certificate for $b_{12}$.}\\
\hline Target & Successive rectangles & Terminal and ground \\\hline
\endhead
\hline\endfoot
1. $[b_{10},b_{5}]$ & $\rho_{2,11}^{12},\allowbreak\ \rho_{2,10}^{13}$ & $[a_{2,3},b_{11}]\quad\mathsf{L}$ \\
\end{longtable}\endgroup
\paragraph{Resolve $b_{14}$.} Its halves are $(9,1)$, $(12,2)$. The companion is $[v_{9,2},b_{10}]$.
\begingroup
\footnotesize\setlength{\tabcolsep}{3pt}\renewcommand{\arraystretch}{1.23}
\begin{longtable}{@{}p{.195\textwidth}p{.55\textwidth}p{.225\textwidth}@{}}
\caption{Companion certificate for $b_{14}$.}\\
\hline Target & Successive rectangles & Terminal and ground \\\hline
\endfirsthead
\multicolumn{3}{l}{\small\itshape Continued: Companion certificate for $b_{14}$.}\\
\hline Target & Successive rectangles & Terminal and ground \\\hline
\endhead
\hline\endfoot
1. $[b_{10},v_{9,2}]$ & $\rho_{9,11}^{12},\allowbreak\ \rho_{9,10}^{13}$ & $[a_{9,3},b_{11}]\quad\mathsf{L}$ \\
\end{longtable}\endgroup
\paragraph{Resolve $b_{15}$.} Its halves are $(7,1)$, $(9,2)$. The companion is $[a_{7,2},b_{14}]$.
\begingroup
\footnotesize\setlength{\tabcolsep}{3pt}\renewcommand{\arraystretch}{1.23}
\begin{longtable}{@{}p{.195\textwidth}p{.55\textwidth}p{.225\textwidth}@{}}
\caption{Companion certificate for $b_{15}$.}\\
\hline Target & Successive rectangles & Terminal and ground \\\hline
\endfirsthead
\multicolumn{3}{l}{\small\itshape Continued: Companion certificate for $b_{15}$.}\\
\hline Target & Successive rectangles & Terminal and ground \\\hline
\endhead
\hline\endfoot
1. $[a_{7,2},b_{14}]$ & $\text{none}$ & $[a_{7,2},b_{14}]\quad\mathsf{L}$ \\
\end{longtable}\endgroup
\paragraph{Resolve $b_{13}$.} Its halves are $(2,4)$, $(3,2)$. The companion is $[b_{5},a_{3,4}]$.
\begingroup
\footnotesize\setlength{\tabcolsep}{3pt}\renewcommand{\arraystretch}{1.23}
\begin{longtable}{@{}p{.195\textwidth}p{.55\textwidth}p{.225\textwidth}@{}}
\caption{Companion certificate for $b_{13}$.}\\
\hline Target & Successive rectangles & Terminal and ground \\\hline
\endfirsthead
\multicolumn{3}{l}{\small\itshape Continued: Companion certificate for $b_{13}$.}\\
\hline Target & Successive rectangles & Terminal and ground \\\hline
\endhead
\hline\endfoot
1. $[a_{3,4},b_{5}]$ & $\rho_{3,8}^{45},\allowbreak\ \rho_{5,8}^{45},\allowbreak\ \rho_{5,8}^{15}$ & $[b_{1},b_{12}]\quad\mathsf{L}$ \\
\end{longtable}\endgroup
\paragraph{Resolve $b_{16}$.} Its halves are $(7,3)$, $(12,4)$. The companion is $[v_{7,4},b_{11}]$.
\begingroup
\footnotesize\setlength{\tabcolsep}{3pt}\renewcommand{\arraystretch}{1.23}
\begin{longtable}{@{}p{.195\textwidth}p{.55\textwidth}p{.225\textwidth}@{}}
\caption{Companion certificate for $b_{16}$.}\\
\hline Target & Successive rectangles & Terminal and ground \\\hline
\endfirsthead
\multicolumn{3}{l}{\small\itshape Continued: Companion certificate for $b_{16}$.}\\
\hline Target & Successive rectangles & Terminal and ground \\\hline
\endhead
\hline\endfoot
1. $[b_{11},v_{7,4}]$ & $\rho_{7,10}^{14},\allowbreak\ \rho_{9,10}^{24},\allowbreak\ \rho_{1,11}^{34},\allowbreak\ \rho_{1,11}^{45},\allowbreak\ \rho_{8,9}^{14},\allowbreak\ \rho_{8,12}^{24},\allowbreak\ \rho_{2,12}^{45},\allowbreak\ \rho_{3,12}^{25},\allowbreak\ \rho_{5,9}^{15},\allowbreak\ \rho_{5,9}^{45},\allowbreak\ \rho_{1,5}^{45},\allowbreak\ \rho_{1,5}^{34},\allowbreak\ \rho_{5,9}^{34},\allowbreak\ \rho_{5,9}^{13},\allowbreak\ \rho_{5,12}^{23},\allowbreak\ \rho_{5,10}^{12},\allowbreak\ \rho_{5,11}^{13},\allowbreak\ \rho_{5,12}^{13}$ & $[b_{11},b_{2}]\quad\mathsf{L}$ \\
\end{longtable}\endgroup
\paragraph{Resolve $b_{17}$.} Its halves are $(7,4)$, $(6,3)$. The companion is $[b_{16},a_{6,4}]$.
\begingroup
\footnotesize\setlength{\tabcolsep}{3pt}\renewcommand{\arraystretch}{1.23}
\begin{longtable}{@{}p{.195\textwidth}p{.55\textwidth}p{.225\textwidth}@{}}
\caption{Companion certificate for $b_{17}$.}\\
\hline Target & Successive rectangles & Terminal and ground \\\hline
\endfirsthead
\multicolumn{3}{l}{\small\itshape Continued: Companion certificate for $b_{17}$.}\\
\hline Target & Successive rectangles & Terminal and ground \\\hline
\endhead
\hline\endfoot
1. $[a_{6,4},b_{16}]$ & $\text{none}$ & $[a_{6,4},b_{16}]\quad\mathsf{L}$ \\
\end{longtable}\endgroup
\paragraph{Resolve $b_{18}$.} Its halves are $(6,1)$, $(4,3)$. The companion is $[b_{17},a_{4,1}]$.
\begingroup
\footnotesize\setlength{\tabcolsep}{3pt}\renewcommand{\arraystretch}{1.23}
\begin{longtable}{@{}p{.195\textwidth}p{.55\textwidth}p{.225\textwidth}@{}}
\caption{Companion certificate for $b_{18}$.}\\
\hline Target & Successive rectangles & Terminal and ground \\\hline
\endfirsthead
\multicolumn{3}{l}{\small\itshape Continued: Companion certificate for $b_{18}$.}\\
\hline Target & Successive rectangles & Terminal and ground \\\hline
\endhead
\hline\endfoot
1. $[a_{4,1},b_{17}]$ & $\rho_{4,7}^{14}$ & $[b_{15},b_{7}]\quad\mathsf{L}$ \\
\end{longtable}\endgroup
\paragraph{Resolve $b_{19}$.} Its halves are $(6,5)$, $(3,3)$. The companion is $[b_{17},b_{1}]$.
\begingroup
\footnotesize\setlength{\tabcolsep}{3pt}\renewcommand{\arraystretch}{1.23}
\begin{longtable}{@{}p{.195\textwidth}p{.55\textwidth}p{.225\textwidth}@{}}
\caption{Companion certificate for $b_{19}$.}\\
\hline Target & Successive rectangles & Terminal and ground \\\hline
\endfirsthead
\multicolumn{3}{l}{\small\itshape Continued: Companion certificate for $b_{19}$.}\\
\hline Target & Successive rectangles & Terminal and ground \\\hline
\endhead
\hline\endfoot
1. $[b_{1},b_{17}]$ & $\rho_{5,7}^{45},\allowbreak\ \rho_{5,7}^{15},\allowbreak\ \rho_{3,9}^{25},\allowbreak\ \rho_{2,9}^{45},\allowbreak\ \rho_{8,9}^{24},\allowbreak\ \rho_{7,8}^{14},\allowbreak\ \rho_{6,8}^{13}$ & $[a_{8,3},b_{18}]\quad\mathsf{L}$ \\
\end{longtable}\endgroup

\section{Complete seed orthogonality derivations}\label{app:seedorth}
All nineteen two-edges are now identified. In this appendix the full seed
labels in~\eqref{eq:seed} are used, and row 12 is literally row 12.
Of the 171 different two-edge pairs, the 115 omitted from the following table
share a row or a column in their supports. Each of the other 56 pairs is
listed. The terminal $\mathsf I$ cases are prescribed values one, expressed
as zero defects; they are not orthogonality assertions about a vector and itself.
\begingroup
\footnotesize\setlength{\tabcolsep}{3pt}\renewcommand{\arraystretch}{1.23}
\begin{longtable}{@{}p{.195\textwidth}p{.55\textwidth}p{.225\textwidth}@{}}
\caption{The 56 non-line two-edge pairs of the seed.}\\
\hline Target & Successive rectangles & Terminal and ground \\\hline
\endfirsthead
\multicolumn{3}{l}{\small\itshape Continued: The 56 non-line two-edge pairs of the seed.}\\
\hline Target & Successive rectangles & Terminal and ground \\\hline
\endhead
\hline\endfoot
B1. $[b_{1},b_{3}]$ & $\rho_{1,3}^{45},\allowbreak\ \rho_{1,3}^{34},\allowbreak\ \rho_{1,6}^{45},\allowbreak\ \rho_{1,6}^{34}$ & $[b_{17},b_{3}]\quad\mathsf{L}$ \\
B2. $[b_{1},b_{7}]$ & $\rho_{3,4}^{25},\allowbreak\ \rho_{2,4}^{45}$ & $[b_{6},b_{7}]\quad\mathsf{L}$ \\
B3. $[b_{1},b_{8}]$ & $\rho_{3,11}^{35}$ & $[b_{12},b_{19}]\quad\mathsf{L}$ \\
B4. $[b_{1},b_{9}]$ & $\rho_{3,10}^{35}$ & $[a_{10,5},b_{19}]\quad\mathsf{L}$ \\
B5. $[b_{1},b_{10}]$ & $\rho_{3,11}^{15}$ & $[a_{3,1},b_{12}]\quad\mathsf{L}$ \\
B6. $[b_{1},b_{11}]$ & $\rho_{3,12}^{35}$ & $[W,b_{19}]\quad\mathsf{L}$ \\
B7. $[b_{1},b_{14}]$ & $\rho_{5,9}^{15},\allowbreak\ \rho_{5,9}^{45},\allowbreak\ \rho_{1,3}^{45},\allowbreak\ \rho_{1,3}^{34},\allowbreak\ \rho_{1,6}^{45},\allowbreak\ \rho_{1,6}^{34}$ & $[b_{17},b_{3}]\quad\mathsf{L}$ \\
B8. $[b_{1},b_{15}]$ & $\rho_{5,7}^{15},\allowbreak\ \rho_{5,7}^{45},\allowbreak\ \rho_{3,6}^{35}$ & $[b_{19},b_{19}]\quad\mathsf{I}$ \\
B9. $[b_{1},b_{16}]$ & $\rho_{3,7}^{35}$ & $[a_{7,5},b_{19}]\quad\mathsf{L}$ \\
B10. $[b_{1},b_{17}]$ & $\rho_{3,6}^{35}$ & $[b_{19},b_{19}]\quad\mathsf{I}$ \\
B11. $[b_{1},b_{18}]$ & $\rho_{3,6}^{15}$ & $[a_{3,1},b_{19}]\quad\mathsf{L}$ \\
B12. $[b_{2},b_{4}]$ & $\rho_{1,5}^{15},\allowbreak\ \rho_{1,3}^{15},\allowbreak\ \rho_{1,3}^{13},\allowbreak\ \rho_{1,6}^{15}$ & $[b_{18},b_{4}]\quad\mathsf{L}$ \\
B13. $[b_{2},b_{5}]$ & $\rho_{2,5}^{24}$ & $[a_{5,2},b_{13}]\quad\mathsf{L}$ \\
B14. $[b_{2},b_{6}]$ & $\rho_{2,5}^{45}$ & $[b_{1},b_{13}]\quad\mathsf{L}$ \\
B15. $[b_{2},b_{8}]$ & $\rho_{5,11}^{13},\allowbreak\ \rho_{5,12}^{13}$ & $[b_{11},b_{2}]\quad\mathsf{L}$ \\
B16. $[b_{2},b_{9}]$ & $\rho_{5,10}^{13}$ & $[a_{5,3},b_{11}]\quad\mathsf{L}$ \\
B17. $[b_{2},b_{19}]$ & $\rho_{5,6}^{15},\allowbreak\ \rho_{3,6}^{15}$ & $[a_{3,1},b_{19}]\quad\mathsf{L}$ \\
B18. $[b_{3},b_{5}]$ & $\rho_{1,2}^{24}$ & $[a_{1,2},b_{13}]\quad\mathsf{L}$ \\
B19. $[b_{3},b_{6}]$ & $\rho_{8,9}^{24},\allowbreak\ \rho_{7,8}^{14},\allowbreak\ \rho_{6,11}^{35}$ & $[b_{19},b_{8}]\quad\mathsf{L}$ \\
B20. $[b_{3},b_{8}]$ & $\rho_{9,10}^{24},\allowbreak\ \rho_{7,10}^{14}$ & $[b_{11},b_{17}]\quad\mathsf{L}$ \\
B21. $[b_{3},b_{9}]$ & $\rho_{9,11}^{24},\allowbreak\ \rho_{7,11}^{14},\allowbreak\ \rho_{6,12}^{13}$ & $[b_{11},b_{18}]\quad\mathsf{L}$ \\
B22. $[b_{3},b_{10}]$ & $\rho_{9,12}^{14}$ & $[b_{14},b_{16}]\quad\mathsf{L}$ \\
B23. $[b_{3},b_{11}]$ & $\rho_{1,12}^{34}$ & $[b_{16},b_{4}]\quad\mathsf{L}$ \\
B24. $[b_{3},b_{12}]$ & $\rho_{1,11}^{45},\allowbreak\ \rho_{1,11}^{34},\allowbreak\ \rho_{9,10}^{24},\allowbreak\ \rho_{7,10}^{14}$ & $[b_{11},b_{17}]\quad\mathsf{L}$ \\
B25. $[b_{3},b_{18}]$ & $\rho_{4,9}^{34},\allowbreak\ \rho_{4,9}^{23}$ & $[b_{15},b_{18}]\quad\mathsf{L}$ \\
B26. $[b_{3},b_{19}]$ & $\rho_{1,6}^{45},\allowbreak\ \rho_{1,6}^{34}$ & $[b_{17},b_{3}]\quad\mathsf{L}$ \\
B27. $[b_{4},b_{7}]$ & $\rho_{1,4}^{34},\allowbreak\ \rho_{4,9}^{34},\allowbreak\ \rho_{4,9}^{23}$ & $[b_{15},b_{18}]\quad\mathsf{L}$ \\
B28. $[b_{4},b_{10}]$ & $\rho_{1,11}^{15}$ & $[a_{1,1},b_{12}]\quad\mathsf{L}$ \\
B29. $[b_{4},b_{13}]$ & $\rho_{1,2}^{45},\allowbreak\ \rho_{8,9}^{24},\allowbreak\ \rho_{7,8}^{14},\allowbreak\ \rho_{6,11}^{35}$ & $[b_{19},b_{8}]\quad\mathsf{L}$ \\
B30. $[b_{4},b_{14}]$ & $\rho_{1,12}^{23},\allowbreak\ \rho_{1,10}^{12},\allowbreak\ \rho_{1,11}^{13},\allowbreak\ \rho_{1,11}^{15}$ & $[a_{1,1},b_{12}]\quad\mathsf{L}$ \\
B31. $[b_{4},b_{15}]$ & $\rho_{1,7}^{13},\allowbreak\ \rho_{1,12}^{14},\allowbreak\ \rho_{9,12}^{14}$ & $[b_{14},b_{16}]\quad\mathsf{L}$ \\
B32. $[b_{5},b_{10}]$ & $\rho_{8,11}^{15}$ & $[b_{12},b_{12}]\quad\mathsf{I}$ \\
B33. $[b_{5},b_{11}]$ & $\rho_{8,10}^{15}$ & $[a_{10,5},b_{12}]\quad\mathsf{L}$ \\
B34. $[b_{5},b_{16}]$ & $\rho_{2,12}^{24}$ & $[b_{13},b_{14}]\quad\mathsf{L}$ \\
B35. $[b_{5},b_{17}]$ & $\rho_{2,7}^{24}$ & $[a_{7,2},b_{13}]\quad\mathsf{L}$ \\
B36. $[b_{5},b_{18}]$ & $\rho_{6,8}^{15}$ & $[b_{12},b_{19}]\quad\mathsf{L}$ \\
B37. $[b_{6},b_{10}]$ & $\rho_{2,11}^{15}$ & $[a_{2,1},b_{12}]\quad\mathsf{L}$ \\
B38. $[b_{6},b_{11}]$ & $\rho_{8,10}^{12}$ & $[b_{12},b_{8}]\quad\mathsf{L}$ \\
B39. $[b_{6},b_{16}]$ & $\rho_{8,12}^{24},\allowbreak\ \rho_{8,9}^{14},\allowbreak\ \rho_{1,11}^{45},\allowbreak\ \rho_{1,11}^{34},\allowbreak\ \rho_{9,10}^{24},\allowbreak\ \rho_{7,10}^{14}$ & $[b_{11},b_{17}]\quad\mathsf{L}$ \\
B40. $[b_{6},b_{17}]$ & $\rho_{2,6}^{35}$ & $[a_{2,3},b_{19}]\quad\mathsf{L}$ \\
B41. $[b_{6},b_{18}]$ & $\rho_{6,8}^{12},\allowbreak\ \rho_{6,11}^{25}$ & $[b_{19},b_{9}]\quad\mathsf{L}$ \\
B42. $[b_{7},b_{10}]$ & $\rho_{4,12}^{12}$ & $[a_{4,1},b_{14}]\quad\mathsf{L}$ \\
B43. $[b_{7},b_{11}]$ & $\rho_{4,12}^{34}$ & $[b_{16},b_{18}]\quad\mathsf{L}$ \\
B44. $[b_{7},b_{12}]$ & $\rho_{4,11}^{25},\allowbreak\ \rho_{4,10}^{35},\allowbreak\ \rho_{6,10}^{15}$ & $[b_{11},b_{19}]\quad\mathsf{L}$ \\
B45. $[b_{7},b_{19}]$ & $\rho_{3,4}^{23},\allowbreak\ \rho_{2,4}^{34},\allowbreak\ \rho_{2,4}^{23},\allowbreak\ \rho_{6,8}^{15}$ & $[b_{12},b_{19}]\quad\mathsf{L}$ \\
B46. $[b_{10},b_{13}]$ & $\rho_{3,12}^{12}$ & $[a_{3,1},b_{14}]\quad\mathsf{L}$ \\
B47. $[b_{10},b_{17}]$ & $\rho_{6,12}^{13}$ & $[b_{11},b_{18}]\quad\mathsf{L}$ \\
B48. $[b_{10},b_{19}]$ & $\rho_{3,12}^{13}$ & $[a_{3,1},b_{11}]\quad\mathsf{L}$ \\
B49. $[b_{11},b_{13}]$ & $\rho_{2,12}^{34}$ & $[a_{2,3},b_{16}]\quad\mathsf{L}$ \\
B50. $[b_{12},b_{13}]$ & $\rho_{3,11}^{25},\allowbreak\ \rho_{3,10}^{35}$ & $[a_{10,5},b_{19}]\quad\mathsf{L}$ \\
B51. $[b_{12},b_{16}]$ & $\rho_{8,12}^{14},\allowbreak\ \rho_{8,11}^{14}$ & $[a_{11,4},b_{12}]\quad\mathsf{L}$ \\
B52. $[b_{12},b_{17}]$ & $\rho_{6,11}^{35}$ & $[b_{19},b_{8}]\quad\mathsf{L}$ \\
B53. $[b_{13},b_{18}]$ & $\rho_{2,4}^{34},\allowbreak\ \rho_{2,4}^{23},\allowbreak\ \rho_{6,8}^{15}$ & $[b_{12},b_{19}]\quad\mathsf{L}$ \\
B54. $[b_{14},b_{17}]$ & $\rho_{7,9}^{14}$ & $[b_{15},b_{3}]\quad\mathsf{L}$ \\
B55. $[b_{14},b_{19}]$ & $\rho_{3,12}^{23},\allowbreak\ \rho_{2,12}^{34}$ & $[a_{2,3},b_{16}]\quad\mathsf{L}$ \\
B56. $[b_{15},b_{19}]$ & $\rho_{3,9}^{23},\allowbreak\ \rho_{2,9}^{34},\allowbreak\ \rho_{1,2}^{34},\allowbreak\ \rho_{1,2}^{45},\allowbreak\ \rho_{8,9}^{24},\allowbreak\ \rho_{7,8}^{14},\allowbreak\ \rho_{6,11}^{35}$ & $[b_{19},b_{8}]\quad\mathsf{L}$ \\
\end{longtable}\endgroup

\subsection{One-edge/two-edge pairs}
There are 418 targets. Exactly 210 pairs share a row or column and need no
rectangle. The following two tables list all other targets, first the 147
one-transfer cases and then the 61 longer cases. A ground marked $\mathsf B$
is supplied by the preceding two-edge table or its line-grounded complement.
Thus the tables have no circular dependency. In particular, the complement
of these 208 listed targets is determined directly by the displayed support grid.
\begingroup
\footnotesize\setlength{\tabcolsep}{3pt}\renewcommand{\arraystretch}{1.23}
\begin{longtable}{@{}p{.195\textwidth}p{.55\textwidth}p{.225\textwidth}@{}}
\caption{The 147 one-transfer mixed pairs.}\\
\hline Target & Successive rectangles & Terminal and ground \\\hline
\endfirsthead
\multicolumn{3}{l}{\small\itshape Continued: The 147 one-transfer mixed pairs.}\\
\hline Target & Successive rectangles & Terminal and ground \\\hline
\endhead
\hline\endfoot
M1. $[a_{1,1},b_{1}]$ & $\rho_{1,5}^{15}$ & $[b_{2},b_{4}]\quad\mathsf{B}$ \\
M2. $[a_{1,1},b_{5}]$ & $\rho_{1,8}^{15}$ & $[b_{12},b_{4}]\quad\mathsf{B}$ \\
M3. $[a_{1,1},b_{8}]$ & $\rho_{1,11}^{13}$ & $[b_{10},b_{4}]\quad\mathsf{B}$ \\
M4. $[a_{1,1},b_{9}]$ & $\rho_{1,10}^{13}$ & $[b_{11},b_{4}]\quad\mathsf{B}$ \\
M5. $[a_{1,1},b_{16}]$ & $\rho_{1,12}^{14}$ & $[b_{10},b_{3}]\quad\mathsf{B}$ \\
M6. $[a_{1,1},b_{17}]$ & $\rho_{1,6}^{13}$ & $[b_{18},b_{4}]\quad\mathsf{B}$ \\
M7. $[a_{1,1},b_{19}]$ & $\rho_{1,6}^{15}$ & $[b_{18},b_{4}]\quad\mathsf{B}$ \\
M8. $[a_{1,2},b_{1}]$ & $\rho_{1,3}^{25}$ & $[b_{13},b_{4}]\quad\mathsf{B}$ \\
M9. $[a_{1,2},b_{10}]$ & $\rho_{1,12}^{12}$ & $[a_{1,1},b_{14}]\quad\mathsf{L}$ \\
M10. $[a_{1,2},b_{11}]$ & $\rho_{1,12}^{23}$ & $[b_{14},b_{4}]\quad\mathsf{B}$ \\
M11. $[a_{1,2},b_{12}]$ & $\rho_{1,11}^{25}$ & $[b_{4},b_{9}]\quad\mathsf{B}$ \\
M12. $[a_{1,2},b_{16}]$ & $\rho_{1,12}^{24}$ & $[b_{14},b_{3}]\quad\mathsf{B}$ \\
M13. $[a_{1,2},b_{18}]$ & $\rho_{1,4}^{23}$ & $[b_{4},b_{7}]\quad\mathsf{B}$ \\
M14. $[a_{1,2},b_{19}]$ & $\rho_{1,3}^{23}$ & $[b_{13},b_{4}]\quad\mathsf{B}$ \\
M15. $[a_{2,1},b_{1}]$ & $\rho_{2,5}^{15}$ & $[b_{2},b_{6}]\quad\mathsf{B}$ \\
M16. $[a_{2,1},b_{3}]$ & $\rho_{2,9}^{14}$ & $[b_{13},b_{14}]\quad\mathsf{B}$ \\
M17. $[a_{2,1},b_{8}]$ & $\rho_{2,10}^{12}$ & $[b_{11},b_{5}]\quad\mathsf{B}$ \\
M18. $[a_{2,1},b_{9}]$ & $\rho_{2,10}^{13}$ & $[a_{2,3},b_{11}]\quad\mathsf{L}$ \\
M19. $[a_{2,1},b_{16}]$ & $\rho_{2,12}^{14}$ & $[b_{10},b_{13}]\quad\mathsf{B}$ \\
M20. $[a_{2,1},b_{17}]$ & $\rho_{2,6}^{13}$ & $[a_{2,3},b_{18}]\quad\mathsf{L}$ \\
M21. $[a_{2,1},b_{19}]$ & $\rho_{2,6}^{15}$ & $[b_{18},b_{6}]\quad\mathsf{B}$ \\
M22. $[a_{2,3},b_{1}]$ & $\rho_{2,3}^{35}$ & $[b_{19},b_{6}]\quad\mathsf{B}$ \\
M23. $[a_{2,3},b_{3}]$ & $\rho_{1,2}^{34}$ & $[b_{13},b_{4}]\quad\mathsf{B}$ \\
M24. $[a_{2,3},b_{7}]$ & $\rho_{2,4}^{23}$ & $[b_{18},b_{5}]\quad\mathsf{B}$ \\
M25. $[a_{2,3},b_{10}]$ & $\rho_{2,12}^{13}$ & $[a_{2,1},b_{11}]\quad\mathsf{L}$ \\
M26. $[a_{2,3},b_{12}]$ & $\rho_{2,11}^{35}$ & $[b_{6},b_{8}]\quad\mathsf{B}$ \\
M27. $[a_{2,3},b_{14}]$ & $\rho_{2,12}^{23}$ & $[b_{11},b_{5}]\quad\mathsf{B}$ \\
M28. $[a_{3,1},b_{5}]$ & $\rho_{2,3}^{12}$ & $[a_{2,1},b_{13}]\quad\mathsf{L}$ \\
M29. $[a_{3,1},b_{6}]$ & $\rho_{3,8}^{12}$ & $[b_{12},b_{13}]\quad\mathsf{B}$ \\
M30. $[a_{3,1},b_{8}]$ & $\rho_{3,11}^{13}$ & $[b_{10},b_{19}]\quad\mathsf{B}$ \\
M31. $[a_{3,1},b_{9}]$ & $\rho_{3,10}^{13}$ & $[b_{11},b_{19}]\quad\mathsf{B}$ \\
M32. $[a_{3,1},b_{16}]$ & $\rho_{3,7}^{13}$ & $[b_{15},b_{19}]\quad\mathsf{B}$ \\
M33. $[a_{3,1},b_{17}]$ & $\rho_{3,6}^{13}$ & $[b_{18},b_{19}]\quad\mathsf{B}$ \\
M34. $[a_{3,4},b_{4}]$ & $\rho_{1,3}^{45}$ & $[b_{1},b_{3}]\quad\mathsf{B}$ \\
M35. $[a_{3,4},b_{5}]$ & $\rho_{2,3}^{24}$ & $[b_{13},b_{13}]\quad\mathsf{I}$ \\
M36. $[a_{3,4},b_{6}]$ & $\rho_{2,3}^{45}$ & $[b_{1},b_{13}]\quad\mathsf{B}$ \\
M37. $[a_{3,4},b_{8}]$ & $\rho_{3,10}^{24}$ & $[a_{10,4},b_{13}]\quad\mathsf{L}$ \\
M38. $[a_{3,4},b_{9}]$ & $\rho_{3,11}^{24}$ & $[a_{11,4},b_{13}]\quad\mathsf{L}$ \\
M39. $[a_{3,4},b_{11}]$ & $\rho_{3,12}^{34}$ & $[b_{16},b_{19}]\quad\mathsf{B}$ \\
M40. $[a_{3,4},b_{14}]$ & $\rho_{3,12}^{24}$ & $[b_{13},b_{16}]\quad\mathsf{B}$ \\
M41. $[a_{3,4},b_{15}]$ & $\rho_{3,9}^{24}$ & $[b_{13},b_{3}]\quad\mathsf{B}$ \\
M42. $[a_{3,4},b_{18}]$ & $\rho_{3,4}^{34}$ & $[b_{19},b_{7}]\quad\mathsf{B}$ \\
M43. $[a_{4,1},b_{3}]$ & $\rho_{4,9}^{14}$ & $[b_{14},b_{7}]\quad\mathsf{B}$ \\
M44. $[a_{4,1},b_{4}]$ & $\rho_{1,4}^{13}$ & $[a_{1,1},b_{18}]\quad\mathsf{L}$ \\
M45. $[a_{4,1},b_{5}]$ & $\rho_{4,8}^{15}$ & $[a_{4,5},b_{12}]\quad\mathsf{L}$ \\
M46. $[a_{4,1},b_{6}]$ & $\rho_{4,8}^{12}$ & $[b_{12},b_{7}]\quad\mathsf{B}$ \\
M47. $[a_{4,1},b_{8}]$ & $\rho_{4,11}^{13}$ & $[b_{10},b_{18}]\quad\mathsf{B}$ \\
M48. $[a_{4,1},b_{9}]$ & $\rho_{4,10}^{13}$ & $[b_{11},b_{18}]\quad\mathsf{B}$ \\
M49. $[a_{4,1},b_{16}]$ & $\rho_{4,7}^{13}$ & $[b_{15},b_{18}]\quad\mathsf{B}$ \\
M50. $[a_{4,1},b_{17}]$ & $\rho_{4,6}^{13}$ & $[b_{18},b_{18}]\quad\mathsf{I}$ \\
M51. $[a_{4,1},b_{19}]$ & $\rho_{3,4}^{13}$ & $[a_{3,1},b_{18}]\quad\mathsf{L}$ \\
M52. $[a_{4,5},b_{2}]$ & $\rho_{4,5}^{45}$ & $[b_{1},b_{7}]\quad\mathsf{B}$ \\
M53. $[a_{4,5},b_{3}]$ & $\rho_{1,4}^{45}$ & $[b_{4},b_{7}]\quad\mathsf{B}$ \\
M54. $[a_{4,5},b_{8}]$ & $\rho_{4,11}^{35}$ & $[b_{12},b_{18}]\quad\mathsf{B}$ \\
M55. $[a_{4,5},b_{9}]$ & $\rho_{4,11}^{25}$ & $[b_{12},b_{7}]\quad\mathsf{B}$ \\
M56. $[a_{4,5},b_{10}]$ & $\rho_{4,11}^{15}$ & $[a_{4,1},b_{12}]\quad\mathsf{L}$ \\
M57. $[a_{4,5},b_{13}]$ & $\rho_{2,4}^{45}$ & $[b_{6},b_{7}]\quad\mathsf{B}$ \\
M58. $[a_{4,5},b_{17}]$ & $\rho_{4,6}^{35}$ & $[b_{18},b_{19}]\quad\mathsf{B}$ \\
M59. $[a_{5,2},b_{3}]$ & $\rho_{5,9}^{24}$ & $[b_{15},b_{2}]\quad\mathsf{B}$ \\
M60. $[a_{5,2},b_{10}]$ & $\rho_{5,12}^{12}$ & $[b_{14},b_{2}]\quad\mathsf{B}$ \\
M61. $[a_{5,2},b_{11}]$ & $\rho_{5,10}^{12}$ & $[b_{2},b_{8}]\quad\mathsf{B}$ \\
M62. $[a_{5,2},b_{12}]$ & $\rho_{5,11}^{25}$ & $[b_{1},b_{9}]\quad\mathsf{B}$ \\
M63. $[a_{5,2},b_{16}]$ & $\rho_{5,12}^{24}$ & $[b_{14},b_{2}]\quad\mathsf{B}$ \\
M64. $[a_{5,3},b_{3}]$ & $\rho_{1,5}^{34}$ & $[b_{2},b_{4}]\quad\mathsf{B}$ \\
M65. $[a_{5,3},b_{7}]$ & $\rho_{4,5}^{34}$ & $[b_{18},b_{2}]\quad\mathsf{B}$ \\
M66. $[a_{5,3},b_{10}]$ & $\rho_{5,12}^{13}$ & $[b_{11},b_{2}]\quad\mathsf{B}$ \\
M67. $[a_{5,3},b_{12}]$ & $\rho_{5,11}^{35}$ & $[b_{1},b_{8}]\quad\mathsf{B}$ \\
M68. $[a_{5,3},b_{15}]$ & $\rho_{5,7}^{13}$ & $[b_{16},b_{2}]\quad\mathsf{B}$ \\
M69. $[a_{6,2},b_{1}]$ & $\rho_{3,6}^{25}$ & $[b_{13},b_{19}]\quad\mathsf{B}$ \\
M70. $[a_{6,2},b_{10}]$ & $\rho_{6,12}^{12}$ & $[b_{14},b_{18}]\quad\mathsf{B}$ \\
M71. $[a_{6,2},b_{11}]$ & $\rho_{6,10}^{12}$ & $[b_{18},b_{8}]\quad\mathsf{B}$ \\
M72. $[a_{6,2},b_{12}]$ & $\rho_{6,11}^{25}$ & $[b_{19},b_{9}]\quad\mathsf{B}$ \\
M73. $[a_{6,2},b_{16}]$ & $\rho_{6,7}^{23}$ & $[a_{7,2},b_{17}]\quad\mathsf{L}$ \\
M74. $[a_{6,4},b_{1}]$ & $\rho_{3,6}^{45}$ & $[a_{3,4},b_{19}]\quad\mathsf{L}$ \\
M75. $[a_{6,4},b_{4}]$ & $\rho_{1,6}^{34}$ & $[b_{17},b_{3}]\quad\mathsf{B}$ \\
M76. $[a_{6,4},b_{5}]$ & $\rho_{2,6}^{24}$ & $[a_{6,2},b_{13}]\quad\mathsf{L}$ \\
M77. $[a_{6,4},b_{6}]$ & $\rho_{2,6}^{45}$ & $[b_{13},b_{19}]\quad\mathsf{B}$ \\
M78. $[a_{6,4},b_{8}]$ & $\rho_{6,11}^{34}$ & $[a_{11,4},b_{17}]\quad\mathsf{L}$ \\
M79. $[a_{6,4},b_{9}]$ & $\rho_{6,10}^{34}$ & $[a_{10,4},b_{17}]\quad\mathsf{L}$ \\
M80. $[a_{6,4},b_{10}]$ & $\rho_{6,12}^{14}$ & $[b_{16},b_{18}]\quad\mathsf{B}$ \\
M81. $[a_{6,4},b_{11}]$ & $\rho_{6,12}^{34}$ & $[b_{16},b_{17}]\quad\mathsf{B}$ \\
M82. $[a_{6,4},b_{14}]$ & $\rho_{6,9}^{14}$ & $[b_{18},b_{3}]\quad\mathsf{B}$ \\
M83. $[a_{6,4},b_{15}]$ & $\rho_{6,7}^{14}$ & $[b_{17},b_{18}]\quad\mathsf{B}$ \\
M84. $[a_{7,2},b_{2}]$ & $\rho_{5,7}^{12}$ & $[a_{5,2},b_{15}]\quad\mathsf{L}$ \\
M85. $[a_{7,2},b_{3}]$ & $\rho_{7,9}^{24}$ & $[b_{15},b_{17}]\quad\mathsf{B}$ \\
M86. $[a_{7,2},b_{10}]$ & $\rho_{7,12}^{12}$ & $[b_{14},b_{15}]\quad\mathsf{B}$ \\
M87. $[a_{7,2},b_{11}]$ & $\rho_{7,10}^{12}$ & $[b_{15},b_{8}]\quad\mathsf{B}$ \\
M88. $[a_{7,2},b_{12}]$ & $\rho_{7,8}^{12}$ & $[b_{15},b_{6}]\quad\mathsf{B}$ \\
M89. $[a_{7,2},b_{18}]$ & $\rho_{4,7}^{23}$ & $[b_{16},b_{7}]\quad\mathsf{B}$ \\
M90. $[a_{7,2},b_{19}]$ & $\rho_{3,7}^{23}$ & $[b_{13},b_{16}]\quad\mathsf{B}$ \\
M91. $[a_{7,5},b_{2}]$ & $\rho_{5,7}^{15}$ & $[b_{1},b_{15}]\quad\mathsf{B}$ \\
M92. $[a_{7,5},b_{3}]$ & $\rho_{1,7}^{45}$ & $[b_{17},b_{4}]\quad\mathsf{B}$ \\
M93. $[a_{7,5},b_{8}]$ & $\rho_{7,11}^{35}$ & $[b_{12},b_{16}]\quad\mathsf{B}$ \\
M94. $[a_{7,5},b_{10}]$ & $\rho_{7,11}^{15}$ & $[b_{12},b_{15}]\quad\mathsf{B}$ \\
M95. $[a_{7,5},b_{11}]$ & $\rho_{7,12}^{35}$ & $[W,b_{16}]\quad\mathsf{L}$ \\
M96. $[a_{7,5},b_{13}]$ & $\rho_{2,7}^{45}$ & $[b_{17},b_{6}]\quad\mathsf{B}$ \\
M97. $[a_{7,5},b_{14}]$ & $\rho_{7,9}^{15}$ & $[a_{9,5},b_{15}]\quad\mathsf{L}$ \\
M98. $[a_{7,5},b_{18}]$ & $\rho_{6,7}^{15}$ & $[b_{15},b_{19}]\quad\mathsf{B}$ \\
M99. $[a_{8,3},b_{1}]$ & $\rho_{3,8}^{35}$ & $[b_{19},b_{5}]\quad\mathsf{B}$ \\
M100. $[a_{8,3},b_{7}]$ & $\rho_{4,8}^{23}$ & $[b_{18},b_{6}]\quad\mathsf{B}$ \\
M101. $[a_{8,3},b_{10}]$ & $\rho_{8,12}^{13}$ & $[b_{11},b_{12}]\quad\mathsf{B}$ \\
M102. $[a_{8,3},b_{13}]$ & $\rho_{3,8}^{23}$ & $[b_{19},b_{6}]\quad\mathsf{B}$ \\
M103. $[a_{8,3},b_{14}]$ & $\rho_{8,12}^{23}$ & $[b_{11},b_{6}]\quad\mathsf{B}$ \\
M104. $[a_{8,3},b_{15}]$ & $\rho_{7,8}^{13}$ & $[b_{12},b_{16}]\quad\mathsf{B}$ \\
M105. $[a_{8,4},b_{1}]$ & $\rho_{5,8}^{45}$ & $[b_{2},b_{5}]\quad\mathsf{B}$ \\
M106. $[a_{8,4},b_{4}]$ & $\rho_{1,8}^{45}$ & $[b_{3},b_{5}]\quad\mathsf{B}$ \\
M107. $[a_{8,4},b_{10}]$ & $\rho_{8,11}^{14}$ & $[a_{11,4},b_{12}]\quad\mathsf{L}$ \\
M108. $[a_{8,4},b_{11}]$ & $\rho_{8,12}^{34}$ & $[a_{8,3},b_{16}]\quad\mathsf{L}$ \\
M109. $[a_{8,4},b_{14}]$ & $\rho_{8,9}^{14}$ & $[b_{12},b_{3}]\quad\mathsf{B}$ \\
M110. $[a_{8,4},b_{15}]$ & $\rho_{8,9}^{24}$ & $[b_{3},b_{6}]\quad\mathsf{B}$ \\
M111. $[a_{9,3},b_{1}]$ & $\rho_{3,9}^{35}$ & $[a_{9,5},b_{19}]\quad\mathsf{L}$ \\
M112. $[a_{9,3},b_{7}]$ & $\rho_{4,9}^{23}$ & $[b_{15},b_{18}]\quad\mathsf{B}$ \\
M113. $[a_{9,3},b_{10}]$ & $\rho_{9,12}^{13}$ & $[b_{11},b_{14}]\quad\mathsf{B}$ \\
M114. $[a_{9,3},b_{13}]$ & $\rho_{3,9}^{23}$ & $[b_{15},b_{19}]\quad\mathsf{B}$ \\
M115. $[a_{9,5},b_{2}]$ & $\rho_{5,9}^{45}$ & $[b_{1},b_{3}]\quad\mathsf{B}$ \\
M116. $[a_{9,5},b_{9}]$ & $\rho_{9,11}^{25}$ & $[b_{12},b_{15}]\quad\mathsf{B}$ \\
M117. $[a_{9,5},b_{10}]$ & $\rho_{9,11}^{15}$ & $[b_{12},b_{14}]\quad\mathsf{B}$ \\
M118. $[a_{9,5},b_{13}]$ & $\rho_{3,9}^{25}$ & $[b_{1},b_{15}]\quad\mathsf{B}$ \\
M119. $[a_{9,5},b_{17}]$ & $\rho_{6,9}^{35}$ & $[a_{9,3},b_{19}]\quad\mathsf{L}$ \\
M120. $[a_{9,5},b_{18}]$ & $\rho_{6,9}^{15}$ & $[b_{14},b_{19}]\quad\mathsf{B}$ \\
M121. $[a_{10,4},b_{4}]$ & $\rho_{1,10}^{34}$ & $[b_{3},b_{9}]\quad\mathsf{B}$ \\
M122. $[a_{10,4},b_{5}]$ & $\rho_{2,10}^{24}$ & $[b_{13},b_{8}]\quad\mathsf{B}$ \\
M123. $[a_{10,4},b_{10}]$ & $\rho_{10,12}^{14}$ & $[b_{11},b_{16}]\quad\mathsf{B}$ \\
M124. $[a_{10,4},b_{14}]$ & $\rho_{10,12}^{24}$ & $[b_{16},b_{8}]\quad\mathsf{B}$ \\
M125. $[a_{10,4},b_{15}]$ & $\rho_{7,10}^{14}$ & $[b_{11},b_{17}]\quad\mathsf{B}$ \\
M126. $[a_{10,4},b_{18}]$ & $\rho_{4,10}^{34}$ & $[b_{7},b_{9}]\quad\mathsf{B}$ \\
M127. $[a_{10,5},b_{2}]$ & $\rho_{5,10}^{15}$ & $[b_{1},b_{11}]\quad\mathsf{B}$ \\
M128. $[a_{10,5},b_{10}]$ & $\rho_{10,11}^{15}$ & $[b_{11},b_{12}]\quad\mathsf{B}$ \\
M129. $[a_{10,5},b_{13}]$ & $\rho_{3,10}^{25}$ & $[b_{1},b_{8}]\quad\mathsf{B}$ \\
M130. $[a_{10,5},b_{17}]$ & $\rho_{6,10}^{35}$ & $[b_{19},b_{9}]\quad\mathsf{B}$ \\
M131. $[a_{10,5},b_{18}]$ & $\rho_{6,10}^{15}$ & $[b_{11},b_{19}]\quad\mathsf{B}$ \\
M132. $[a_{11,4},b_{1}]$ & $\rho_{5,11}^{45}$ & $[b_{12},b_{2}]\quad\mathsf{B}$ \\
M133. $[a_{11,4},b_{4}]$ & $\rho_{1,11}^{34}$ & $[b_{3},b_{8}]\quad\mathsf{B}$ \\
M134. $[a_{11,4},b_{5}]$ & $\rho_{2,11}^{24}$ & $[b_{13},b_{9}]\quad\mathsf{B}$ \\
M135. $[a_{11,4},b_{6}]$ & $\rho_{2,11}^{45}$ & $[b_{12},b_{13}]\quad\mathsf{B}$ \\
M136. $[a_{11,4},b_{11}]$ & $\rho_{11,12}^{34}$ & $[b_{16},b_{8}]\quad\mathsf{B}$ \\
M137. $[a_{11,4},b_{14}]$ & $\rho_{11,12}^{24}$ & $[b_{16},b_{9}]\quad\mathsf{B}$ \\
M138. $[a_{11,4},b_{15}]$ & $\rho_{9,11}^{24}$ & $[b_{3},b_{9}]\quad\mathsf{B}$ \\
M139. $[a_{11,4},b_{18}]$ & $\rho_{4,11}^{34}$ & $[b_{7},b_{8}]\quad\mathsf{B}$ \\
M140. $[W,b_{2}]$ & $\rho_{5,12}^{15}$ & $[b_{1},b_{10}]\quad\mathsf{B}$ \\
M141. $[W,b_{3}]$ & $\rho_{1,12}^{45}$ & $[b_{16},b_{4}]\quad\mathsf{B}$ \\
M142. $[W,b_{8}]$ & $\rho_{11,12}^{35}$ & $[b_{11},b_{12}]\quad\mathsf{B}$ \\
M143. $[W,b_{9}]$ & $\rho_{11,12}^{25}$ & $[b_{12},b_{14}]\quad\mathsf{B}$ \\
M144. $[W,b_{13}]$ & $\rho_{3,12}^{25}$ & $[b_{1},b_{14}]\quad\mathsf{B}$ \\
M145. $[W,b_{15}]$ & $\rho_{9,12}^{25}$ & $[a_{9,5},b_{14}]\quad\mathsf{L}$ \\
M146. $[W,b_{17}]$ & $\rho_{6,12}^{35}$ & $[b_{11},b_{19}]\quad\mathsf{B}$ \\
M147. $[W,b_{18}]$ & $\rho_{6,12}^{15}$ & $[b_{10},b_{19}]\quad\mathsf{B}$ \\
\end{longtable}\endgroup
\begingroup
\footnotesize\setlength{\tabcolsep}{3pt}\renewcommand{\arraystretch}{1.23}
\begin{longtable}{@{}p{.195\textwidth}p{.55\textwidth}p{.225\textwidth}@{}}
\caption{The 61 remaining mixed pairs.}\\
\hline Target & Successive rectangles & Terminal and ground \\\hline
\endfirsthead
\multicolumn{3}{l}{\small\itshape Continued: The 61 remaining mixed pairs.}\\
\hline Target & Successive rectangles & Terminal and ground \\\hline
\endhead
\hline\endfoot
N1. $[a_{1,1},b_{6}]$ & $\rho_{1,8}^{12},\allowbreak\ \rho_{1,11}^{25}$ & $[b_{4},b_{9}]\quad\mathsf{B}$ \\
N2. $[a_{1,1},b_{7}]$ & $\rho_{1,4}^{14},\allowbreak\ \rho_{4,9}^{14}$ & $[b_{14},b_{7}]\quad\mathsf{B}$ \\
N3. $[a_{1,1},b_{13}]$ & $\rho_{1,2}^{14},\allowbreak\ \rho_{2,9}^{14}$ & $[b_{13},b_{14}]\quad\mathsf{B}$ \\
N4. $[a_{1,2},b_{2}]$ & $\rho_{1,5}^{24},\allowbreak\ \rho_{5,9}^{24}$ & $[b_{15},b_{2}]\quad\mathsf{B}$ \\
N5. $[a_{1,2},b_{17}]$ & $\rho_{1,7}^{24},\allowbreak\ \rho_{7,9}^{24}$ & $[b_{15},b_{17}]\quad\mathsf{B}$ \\
N6. $[a_{2,1},b_{4}]$ & $\rho_{1,2}^{15},\allowbreak\ \rho_{1,8}^{12},\allowbreak\ \rho_{1,11}^{25}$ & $[b_{4},b_{9}]\quad\mathsf{B}$ \\
N7. $[a_{2,1},b_{7}]$ & $\rho_{2,4}^{12},\allowbreak\ \rho_{4,8}^{15}$ & $[a_{4,5},b_{12}]\quad\mathsf{L}$ \\
N8. $[a_{2,3},b_{2}]$ & $\rho_{2,5}^{34},\allowbreak\ \rho_{3,5}^{23},\allowbreak\ \rho_{5,6}^{25},\allowbreak\ \rho_{3,6}^{25}$ & $[b_{13},b_{19}]\quad\mathsf{B}$ \\
N9. $[a_{2,3},b_{15}]$ & $\rho_{2,7}^{13},\allowbreak\ \rho_{2,12}^{14}$ & $[b_{10},b_{13}]\quad\mathsf{B}$ \\
N10. $[a_{3,1},b_{3}]$ & $\rho_{3,9}^{14},\allowbreak\ \rho_{3,12}^{24}$ & $[b_{13},b_{16}]\quad\mathsf{B}$ \\
N11. $[a_{3,1},b_{4}]$ & $\rho_{1,3}^{13},\allowbreak\ \rho_{1,6}^{15}$ & $[b_{18},b_{4}]\quad\mathsf{B}$ \\
N12. $[a_{3,1},b_{7}]$ & $\rho_{3,4}^{12},\allowbreak\ \rho_{2,4}^{14},\allowbreak\ \rho_{2,4}^{12},\allowbreak\ \rho_{4,8}^{15}$ & $[a_{4,5},b_{12}]\quad\mathsf{L}$ \\
N13. $[a_{3,4},b_{10}]$ & $\rho_{3,12}^{14},\allowbreak\ \rho_{3,7}^{13}$ & $[b_{15},b_{19}]\quad\mathsf{B}$ \\
N14. $[a_{3,4},b_{12}]$ & $\rho_{3,11}^{45},\allowbreak\ \rho_{5,11}^{45}$ & $[b_{12},b_{2}]\quad\mathsf{B}$ \\
N15. $[a_{4,1},b_{1}]$ & $\rho_{4,5}^{15},\allowbreak\ \rho_{4,5}^{45}$ & $[b_{1},b_{7}]\quad\mathsf{B}$ \\
N16. $[a_{4,1},b_{13}]$ & $\rho_{2,4}^{14},\allowbreak\ \rho_{2,4}^{12},\allowbreak\ \rho_{4,8}^{15}$ & $[a_{4,5},b_{12}]\quad\mathsf{L}$ \\
N17. $[a_{4,5},b_{11}]$ & $\rho_{4,12}^{35},\allowbreak\ \rho_{6,12}^{15}$ & $[b_{10},b_{19}]\quad\mathsf{B}$ \\
N18. $[a_{4,5},b_{14}]$ & $\rho_{4,12}^{25},\allowbreak\ \rho_{4,12}^{45},\allowbreak\ \rho_{4,7}^{35},\allowbreak\ \rho_{6,7}^{15}$ & $[b_{15},b_{19}]\quad\mathsf{B}$ \\
N19. $[a_{4,5},b_{15}]$ & $\rho_{4,9}^{25},\allowbreak\ \rho_{4,9}^{45},\allowbreak\ \rho_{1,4}^{45}$ & $[b_{4},b_{7}]\quad\mathsf{B}$ \\
N20. $[a_{4,5},b_{16}]$ & $\rho_{4,7}^{35},\allowbreak\ \rho_{6,7}^{15}$ & $[b_{15},b_{19}]\quad\mathsf{B}$ \\
N21. $[a_{5,2},b_{4}]$ & $\rho_{1,5}^{25},\allowbreak\ \rho_{1,3}^{25}$ & $[b_{13},b_{4}]\quad\mathsf{B}$ \\
N22. $[a_{5,2},b_{17}]$ & $\rho_{5,7}^{24},\allowbreak\ \rho_{5,7}^{12}$ & $[a_{5,2},b_{15}]\quad\mathsf{L}$ \\
N23. $[a_{5,2},b_{18}]$ & $\rho_{4,5}^{23},\allowbreak\ \rho_{4,5}^{34}$ & $[b_{18},b_{2}]\quad\mathsf{B}$ \\
N24. $[a_{5,2},b_{19}]$ & $\rho_{5,6}^{25},\allowbreak\ \rho_{3,6}^{25}$ & $[b_{13},b_{19}]\quad\mathsf{B}$ \\
N25. $[a_{5,3},b_{5}]$ & $\rho_{5,8}^{35},\allowbreak\ \rho_{3,8}^{35}$ & $[b_{19},b_{5}]\quad\mathsf{B}$ \\
N26. $[a_{5,3},b_{6}]$ & $\rho_{2,5}^{35},\allowbreak\ \rho_{2,3}^{35}$ & $[b_{19},b_{6}]\quad\mathsf{B}$ \\
N27. $[a_{5,3},b_{13}]$ & $\rho_{3,5}^{23},\allowbreak\ \rho_{5,6}^{25},\allowbreak\ \rho_{3,6}^{25}$ & $[b_{13},b_{19}]\quad\mathsf{B}$ \\
N28. $[a_{5,3},b_{14}]$ & $\rho_{5,12}^{23},\allowbreak\ \rho_{5,10}^{12}$ & $[b_{2},b_{8}]\quad\mathsf{B}$ \\
N29. $[a_{6,2},b_{2}]$ & $\rho_{5,6}^{12},\allowbreak\ \rho_{4,5}^{23},\allowbreak\ \rho_{4,5}^{34}$ & $[b_{18},b_{2}]\quad\mathsf{B}$ \\
N30. $[a_{6,2},b_{3}]$ & $\rho_{6,9}^{24},\allowbreak\ \rho_{6,7}^{14}$ & $[b_{17},b_{18}]\quad\mathsf{B}$ \\
N31. $[a_{6,2},b_{4}]$ & $\rho_{1,6}^{25},\allowbreak\ \rho_{1,3}^{23}$ & $[b_{13},b_{4}]\quad\mathsf{B}$ \\
N32. $[a_{6,4},b_{12}]$ & $\rho_{6,11}^{45},\allowbreak\ \rho_{3,11}^{34},\allowbreak\ \rho_{3,10}^{24}$ & $[a_{10,4},b_{13}]\quad\mathsf{L}$ \\
N33. $[a_{7,2},b_{1}]$ & $\rho_{3,7}^{25},\allowbreak\ \rho_{2,7}^{45}$ & $[b_{17},b_{6}]\quad\mathsf{B}$ \\
N34. $[a_{7,2},b_{4}]$ & $\rho_{1,7}^{23},\allowbreak\ \rho_{1,12}^{24}$ & $[b_{14},b_{3}]\quad\mathsf{B}$ \\
N35. $[a_{7,5},b_{7}]$ & $\rho_{4,7}^{45},\allowbreak\ \rho_{4,6}^{35}$ & $[b_{18},b_{19}]\quad\mathsf{B}$ \\
N36. $[a_{7,5},b_{9}]$ & $\rho_{7,11}^{25},\allowbreak\ \rho_{7,8}^{12}$ & $[b_{15},b_{6}]\quad\mathsf{B}$ \\
N37. $[a_{8,3},b_{2}]$ & $\rho_{5,8}^{13},\allowbreak\ \rho_{5,11}^{35}$ & $[b_{1},b_{8}]\quad\mathsf{B}$ \\
N38. $[a_{8,3},b_{3}]$ & $\rho_{1,8}^{34},\allowbreak\ \rho_{1,8}^{45}$ & $[b_{3},b_{5}]\quad\mathsf{B}$ \\
N39. $[a_{8,4},b_{8}]$ & $\rho_{8,10}^{24},\allowbreak\ \rho_{2,10}^{45},\allowbreak\ \rho_{3,10}^{25}$ & $[b_{1},b_{8}]\quad\mathsf{B}$ \\
N40. $[a_{8,4},b_{9}]$ & $\rho_{8,11}^{24},\allowbreak\ \rho_{2,11}^{45}$ & $[b_{12},b_{13}]\quad\mathsf{B}$ \\
N41. $[a_{8,4},b_{18}]$ & $\rho_{4,8}^{34},\allowbreak\ \rho_{4,8}^{23}$ & $[b_{18},b_{6}]\quad\mathsf{B}$ \\
N42. $[a_{8,4},b_{19}]$ & $\rho_{6,8}^{45},\allowbreak\ \rho_{2,6}^{24}$ & $[a_{6,2},b_{13}]\quad\mathsf{L}$ \\
N43. $[a_{9,3},b_{2}]$ & $\rho_{5,9}^{34},\allowbreak\ \rho_{1,5}^{34}$ & $[b_{2},b_{4}]\quad\mathsf{B}$ \\
N44. $[a_{9,3},b_{5}]$ & $\rho_{2,9}^{23},\allowbreak\ \rho_{2,7}^{13},\allowbreak\ \rho_{2,12}^{14}$ & $[b_{10},b_{13}]\quad\mathsf{B}$ \\
N45. $[a_{9,3},b_{6}]$ & $\rho_{8,9}^{23},\allowbreak\ \rho_{7,8}^{13}$ & $[b_{12},b_{16}]\quad\mathsf{B}$ \\
N46. $[a_{9,3},b_{12}]$ & $\rho_{8,9}^{13},\allowbreak\ \rho_{8,12}^{23}$ & $[b_{11},b_{6}]\quad\mathsf{B}$ \\
N47. $[a_{9,5},b_{7}]$ & $\rho_{4,9}^{45},\allowbreak\ \rho_{1,4}^{45}$ & $[b_{4},b_{7}]\quad\mathsf{B}$ \\
N48. $[a_{9,5},b_{8}]$ & $\rho_{9,10}^{25},\allowbreak\ \rho_{7,10}^{15},\allowbreak\ \rho_{7,12}^{35}$ & $[W,b_{16}]\quad\mathsf{L}$ \\
N49. $[a_{9,5},b_{11}]$ & $\rho_{9,10}^{15},\allowbreak\ \rho_{10,12}^{25},\allowbreak\ \rho_{11,12}^{35}$ & $[b_{11},b_{12}]\quad\mathsf{B}$ \\
N50. $[a_{9,5},b_{16}]$ & $\rho_{9,12}^{45},\allowbreak\ \rho_{1,12}^{45}$ & $[b_{16},b_{4}]\quad\mathsf{B}$ \\
N51. $[a_{10,4},b_{1}]$ & $\rho_{5,10}^{45},\allowbreak\ \rho_{5,10}^{15}$ & $[b_{1},b_{11}]\quad\mathsf{B}$ \\
N52. $[a_{10,4},b_{6}]$ & $\rho_{2,10}^{45},\allowbreak\ \rho_{3,10}^{25}$ & $[b_{1},b_{8}]\quad\mathsf{B}$ \\
N53. $[a_{10,4},b_{12}]$ & $\rho_{8,10}^{14},\allowbreak\ \rho_{8,12}^{34}$ & $[a_{8,3},b_{16}]\quad\mathsf{L}$ \\
N54. $[a_{10,4},b_{19}]$ & $\rho_{3,10}^{34},\allowbreak\ \rho_{3,11}^{24}$ & $[a_{11,4},b_{13}]\quad\mathsf{L}$ \\
N55. $[a_{10,5},b_{3}]$ & $\rho_{1,10}^{45},\allowbreak\ \rho_{1,10}^{34}$ & $[b_{3},b_{9}]\quad\mathsf{B}$ \\
N56. $[a_{10,5},b_{7}]$ & $\rho_{4,10}^{25},\allowbreak\ \rho_{4,11}^{35}$ & $[b_{12},b_{18}]\quad\mathsf{B}$ \\
N57. $[a_{10,5},b_{14}]$ & $\rho_{10,12}^{25},\allowbreak\ \rho_{11,12}^{35}$ & $[b_{11},b_{12}]\quad\mathsf{B}$ \\
N58. $[a_{10,5},b_{15}]$ & $\rho_{7,10}^{15},\allowbreak\ \rho_{7,12}^{35}$ & $[W,b_{16}]\quad\mathsf{L}$ \\
N59. $[a_{10,5},b_{16}]$ & $\rho_{7,10}^{35},\allowbreak\ \rho_{7,11}^{25},\allowbreak\ \rho_{7,8}^{12}$ & $[b_{15},b_{6}]\quad\mathsf{B}$ \\
N60. $[a_{11,4},b_{19}]$ & $\rho_{3,11}^{34},\allowbreak\ \rho_{3,10}^{24}$ & $[a_{10,4},b_{13}]\quad\mathsf{L}$ \\
N61. $[W,b_{7}]$ & $\rho_{4,12}^{45},\allowbreak\ \rho_{4,7}^{35},\allowbreak\ \rho_{6,7}^{15}$ & $[b_{15},b_{19}]\quad\mathsf{B}$ \\
\end{longtable}\endgroup

\subsection{One-edge/one-edge pairs}
The final 231 targets are obtained as in Proposition~\ref{prop:seed}.
Their companion cannot contain two one-edges unless a forbidden one-edge
rectangle occurs. All other companion values have just been certified.
This explains coverage without adding an unnecessary 231-line table.

\clearpage
\section{Complete symbolic interface derivations}\label{app:interface}
In all four tables, old row 12 means the \emph{terminal row $w$}.
The unchanged old one-edge labels are $a_{r,c}$, with $W=a_{12,5}$.
The old part of the template is the first eleven seed rows with
$b_{14}$ at $(9,1)$ replaced by $P_0$ and $b_{16}$ at $(7,3)$ by $Q_0$,
followed by $(b_{10},P_t,b_{11},Q_t,W)$ at row $w$.

The allowed common-column grounds use the full supports
$C(P_j)=\{1,2\}$, $C(Q_j)=\{3,4\}$ and $C(U_j)=\{5\}$.
All old-label supports are read from this grid; the path grounds are precisely
Lemma~\ref{lem:path} and~\eqref{eq:Panchors}--\eqref{eq:Qanchors}.
No interface hypothesis is used except the explicit $\mathsf E$ premise
in the prefix table.

Each target consists of a displayed chain label and one of the 43 old labels.
All targets not printed in the applicable table satisfy one of these grounds;
this is a direct support comparison with the fixed grid. The printed targets
are the complete complementary lists. If symbolic labels coincide at an
endpoint, interpret brackets using their actual identity after substitution.

\subsection{Interior: $2\le i\le t-1$}\label{tab:interior}
Set $x=r_{i-1}$, $y=r_i$, $z=r_{i+1}$. The only extra rows used are
\[
\begin{array}{c|ccccc}
x&P_{i-1}&P_{i-2}&Q_{i-1}&Q_{i-2}&U_{i-1}\\
y&P_i&P_{i-1}&Q_i&Q_{i-1}&U_i\\
z&P_{i+1}&P_i&Q_{i+1}&Q_i&U_{i+1}
\end{array}.
\]
There are 86 targets: 57 grounded and 29 in the table. The smallest index is
$i-2\ge0$; at $i=2$ its first-half representative is the old port cell.
The largest row index is $i+1\le t$. No row outside the construction is used.
\begingroup
\footnotesize\setlength{\tabcolsep}{3pt}\renewcommand{\arraystretch}{1.23}
\begin{longtable}{@{}p{.195\textwidth}p{.55\textwidth}p{.225\textwidth}@{}}
\caption{Interior old-label relations at an arbitrary index.}\\
\hline Target & Successive rectangles & Terminal and ground \\\hline
\endfirsthead
\multicolumn{3}{l}{\small\itshape Continued: Interior old-label relations at an arbitrary index.}\\
\hline Target & Successive rectangles & Terminal and ground \\\hline
\endhead
\hline\endfoot
I1. $[P_{i},a_{10,4}]$ & $\rho_{10,y}^{14}$ & $[Q_{i-1},b_{11}]\quad\mathsf{C}$ \\
I2. $[P_{i},a_{10,5}]$ & $\rho_{10,y}^{15},\allowbreak\ \rho_{w,y}^{35}$ & $[Q_{i},W]\quad\mathsf{H}$ \\
I3. $[P_{i},a_{11,4}]$ & $\rho_{11,z}^{24}$ & $[Q_{i},b_{9}]\quad\mathsf{C}$ \\
I4. $[P_{i},a_{2,3}]$ & $\rho_{2,y}^{13},\allowbreak\ \rho_{2,z}^{14}$ & $[P_{i+1},b_{13}]\quad\mathsf{C}$ \\
I5. $[P_{i},a_{3,4}]$ & $\rho_{3,z}^{24}$ & $[Q_{i},b_{13}]\quad\mathsf{C}$ \\
I6. $[P_{i},a_{4,5}]$ & $\rho_{4,z}^{25},\allowbreak\ \rho_{4,z}^{45},\allowbreak\ \rho_{4,y}^{35},\allowbreak\ \rho_{6,y}^{15},\allowbreak\ \rho_{3,z}^{23}$ & $[Q_{i+1},b_{13}]\quad\mathsf{C}$ \\
I7. $[P_{i},a_{5,3}]$ & $\rho_{5,y}^{13}$ & $[Q_{i},b_{2}]\quad\mathsf{C}$ \\
I8. $[P_{i},a_{6,4}]$ & $\rho_{6,y}^{14}$ & $[Q_{i-1},b_{18}]\quad\mathsf{C}$ \\
I9. $[P_{i},a_{7,5}]$ & $\rho_{7,y}^{15},\allowbreak\ \rho_{9,y}^{25}$ & $[P_{i-1},a_{9,5}]\quad\mathsf{H}$ \\
I10. $[P_{i},a_{8,3}]$ & $\rho_{8,y}^{13},\allowbreak\ \rho_{11,y}^{35},\allowbreak\ \rho_{10,y}^{25},\allowbreak\ \rho_{10,x}^{15},\allowbreak\ \rho_{w,x}^{35}$ & $[Q_{i-1},W]\quad\mathsf{H}$ \\
I11. $[P_{i},a_{8,4}]$ & $\rho_{8,z}^{24},\allowbreak\ \rho_{2,z}^{45},\allowbreak\ \rho_{3,z}^{25},\allowbreak\ \rho_{5,y}^{15},\allowbreak\ \rho_{5,y}^{45},\allowbreak\ \rho_{3,x}^{35}$ & $[U_{i-1},b_{19}]\quad\mathsf{C}$ \\
I12. $[P_{i},b_{1}]$ & $\rho_{5,y}^{15},\allowbreak\ \rho_{5,y}^{45},\allowbreak\ \rho_{3,x}^{35}$ & $[U_{i-1},b_{19}]\quad\mathsf{C}$ \\
I13. $[P_{i},b_{17}]$ & $\rho_{6,y}^{13}$ & $[Q_{i},b_{18}]\quad\mathsf{C}$ \\
I14. $[P_{i},b_{19}]$ & $\rho_{3,z}^{23}$ & $[Q_{i+1},b_{13}]\quad\mathsf{C}$ \\
I15. $[P_{i},b_{4}]$ & $\rho_{1,y}^{13},\allowbreak\ \rho_{1,z}^{14}$ & $[P_{i+1},b_{3}]\quad\mathsf{H}$ \\
I16. $[Q_{i},a_{10,5}]$ & $\rho_{10,y}^{35},\allowbreak\ \rho_{11,y}^{25}$ & $[P_{i-1},b_{12}]\quad\mathsf{C}$ \\
I17. $[Q_{i},a_{1,1}]$ & $\rho_{1,z}^{14}$ & $[P_{i+1},b_{3}]\quad\mathsf{H}$ \\
I18. $[Q_{i},a_{1,2}]$ & $\rho_{1,z}^{24}$ & $[P_{i},b_{3}]\quad\mathsf{H}$ \\
I19. $[Q_{i},a_{2,1}]$ & $\rho_{2,z}^{14}$ & $[P_{i+1},b_{13}]\quad\mathsf{C}$ \\
I20. $[Q_{i},a_{3,1}]$ & $\rho_{3,y}^{13},\allowbreak\ \rho_{3,z}^{23}$ & $[Q_{i+1},b_{13}]\quad\mathsf{C}$ \\
I21. $[Q_{i},a_{4,1}]$ & $\rho_{4,y}^{13}$ & $[P_{i},b_{18}]\quad\mathsf{C}$ \\
I22. $[Q_{i},a_{4,5}]$ & $\rho_{4,y}^{35},\allowbreak\ \rho_{6,y}^{15},\allowbreak\ \rho_{3,z}^{23}$ & $[Q_{i+1},b_{13}]\quad\mathsf{C}$ \\
I23. $[Q_{i},a_{5,2}]$ & $\rho_{5,z}^{24}$ & $[P_{i},b_{2}]\quad\mathsf{C}$ \\
I24. $[Q_{i},a_{6,2}]$ & $\rho_{6,y}^{23},\allowbreak\ \rho_{6,x}^{13}$ & $[Q_{i-1},b_{18}]\quad\mathsf{C}$ \\
I25. $[Q_{i},a_{9,5}]$ & $\rho_{9,z}^{45},\allowbreak\ \rho_{1,z}^{45}$ & $[Q_{i},b_{4}]\quad\mathsf{C}$ \\
I26. $[Q_{i},b_{1}]$ & $\rho_{3,y}^{35}$ & $[U_{i},b_{19}]\quad\mathsf{C}$ \\
I27. $[Q_{i},b_{12}]$ & $\rho_{11,y}^{35},\allowbreak\ \rho_{10,y}^{25},\allowbreak\ \rho_{10,x}^{15},\allowbreak\ \rho_{w,x}^{35}$ & $[Q_{i-1},W]\quad\mathsf{H}$ \\
I28. $[Q_{i},b_{5}]$ & $\rho_{2,z}^{24}$ & $[P_{i},b_{13}]\quad\mathsf{C}$ \\
I29. $[Q_{i},b_{6}]$ & $\rho_{2,z}^{45},\allowbreak\ \rho_{3,z}^{25},\allowbreak\ \rho_{5,y}^{15},\allowbreak\ \rho_{5,y}^{45},\allowbreak\ \rho_{3,x}^{35}$ & $[U_{i-1},b_{19}]\quad\mathsf{C}$ \\
\end{longtable}\endgroup

\subsection{Terminal index: $t\ge2$}\label{tab:tail}
Set $x=r_{t-1}$ and $y=r_t$. The extra rows are
\[
\begin{array}{c|ccccc}
x&P_{t-1}&P_{t-2}&Q_{t-1}&Q_{t-2}&U_{t-1}\\
y&P_t&P_{t-1}&Q_t&Q_{t-1}&U_t
\end{array}.
\]
There are 86 targets: 57 grounded and the following 29. No label $P_{t+1}$
or $Q_{t+1}$ and no nonexistent extra row is introduced.
\begingroup
\footnotesize\setlength{\tabcolsep}{3pt}\renewcommand{\arraystretch}{1.23}
\begin{longtable}{@{}p{.195\textwidth}p{.55\textwidth}p{.225\textwidth}@{}}
\caption{Terminal old-label relations.}\\
\hline Target & Successive rectangles & Terminal and ground \\\hline
\endfirsthead
\multicolumn{3}{l}{\small\itshape Continued: Terminal old-label relations.}\\
\hline Target & Successive rectangles & Terminal and ground \\\hline
\endhead
\hline\endfoot
T1. $[P_{t},a_{10,4}]$ & $\rho_{10,y}^{14}$ & $[Q_{t-1},b_{11}]\quad\mathsf{C}$ \\
T2. $[P_{t},a_{10,5}]$ & $\rho_{10,w}^{25},\allowbreak\ \rho_{11,w}^{35}$ & $[b_{11},b_{12}]\quad\mathsf{C}$ \\
T3. $[P_{t},a_{11,4}]$ & $\rho_{11,w}^{24}$ & $[Q_{t},b_{9}]\quad\mathsf{C}$ \\
T4. $[P_{t},a_{2,3}]$ & $\rho_{2,w}^{23},\allowbreak\ \rho_{8,10}^{15}$ & $[a_{10,5},b_{12}]\quad\mathsf{C}$ \\
T5. $[P_{t},a_{3,4}]$ & $\rho_{3,w}^{24}$ & $[Q_{t},b_{13}]\quad\mathsf{C}$ \\
T6. $[P_{t},a_{4,5}]$ & $\rho_{4,w}^{25},\allowbreak\ \rho_{4,w}^{45},\allowbreak\ \rho_{4,y}^{35},\allowbreak\ \rho_{6,y}^{15},\allowbreak\ \rho_{3,w}^{23},\allowbreak\ \rho_{2,w}^{34}$ & $[Q_{t},a_{2,3}]\quad\mathsf{C}$ \\
T7. $[P_{t},a_{5,3}]$ & $\rho_{5,y}^{13}$ & $[Q_{t},b_{2}]\quad\mathsf{C}$ \\
T8. $[P_{t},a_{6,4}]$ & $\rho_{6,y}^{14}$ & $[Q_{t-1},b_{18}]\quad\mathsf{C}$ \\
T9. $[P_{t},a_{7,5}]$ & $\rho_{7,y}^{15},\allowbreak\ \rho_{9,y}^{25}$ & $[P_{t-1},a_{9,5}]\quad\mathsf{H}$ \\
T10. $[P_{t},a_{8,3}]$ & $\rho_{8,w}^{23},\allowbreak\ \rho_{8,10}^{12}$ & $[b_{12},b_{8}]\quad\mathsf{L}$ \\
T11. $[P_{t},a_{8,4}]$ & $\rho_{8,w}^{24},\allowbreak\ \rho_{2,w}^{45},\allowbreak\ \rho_{3,w}^{25},\allowbreak\ \rho_{5,y}^{15},\allowbreak\ \rho_{5,y}^{45},\allowbreak\ \rho_{3,x}^{35}$ & $[U_{t-1},b_{19}]\quad\mathsf{C}$ \\
T12. $[P_{t},b_{1}]$ & $\rho_{5,y}^{15},\allowbreak\ \rho_{5,y}^{45},\allowbreak\ \rho_{3,x}^{35}$ & $[U_{t-1},b_{19}]\quad\mathsf{C}$ \\
T13. $[P_{t},b_{17}]$ & $\rho_{6,y}^{13}$ & $[Q_{t},b_{18}]\quad\mathsf{C}$ \\
T14. $[P_{t},b_{19}]$ & $\rho_{3,w}^{23},\allowbreak\ \rho_{2,w}^{34}$ & $[Q_{t},a_{2,3}]\quad\mathsf{C}$ \\
T15. $[P_{t},b_{4}]$ & $\rho_{1,y}^{13},\allowbreak\ \rho_{1,w}^{14},\allowbreak\ \rho_{9,w}^{14}$ & $[P_0,Q_{t}]\quad\mathsf{H}$ \\
T16. $[Q_{t},a_{10,5}]$ & $\rho_{10,y}^{35},\allowbreak\ \rho_{11,y}^{25}$ & $[P_{t-1},b_{12}]\quad\mathsf{C}$ \\
T17. $[Q_{t},a_{1,1}]$ & $\rho_{1,w}^{14},\allowbreak\ \rho_{9,w}^{14}$ & $[P_0,Q_{t}]\quad\mathsf{H}$ \\
T18. $[Q_{t},a_{1,2}]$ & $\rho_{1,w}^{24}$ & $[P_{t},b_{3}]\quad\mathsf{H}$ \\
T19. $[Q_{t},a_{2,1}]$ & $\rho_{2,w}^{14},\allowbreak\ \rho_{3,w}^{12}$ & $[P_{t},a_{3,1}]\quad\mathsf{C}$ \\
T20. $[Q_{t},a_{3,1}]$ & $\rho_{3,y}^{13},\allowbreak\ \rho_{3,w}^{23},\allowbreak\ \rho_{2,w}^{34}$ & $[Q_{t},a_{2,3}]\quad\mathsf{C}$ \\
T21. $[Q_{t},a_{4,1}]$ & $\rho_{4,y}^{13}$ & $[P_{t},b_{18}]\quad\mathsf{C}$ \\
T22. $[Q_{t},a_{4,5}]$ & $\rho_{4,y}^{35},\allowbreak\ \rho_{6,y}^{15},\allowbreak\ \rho_{3,w}^{23},\allowbreak\ \rho_{2,w}^{34}$ & $[Q_{t},a_{2,3}]\quad\mathsf{C}$ \\
T23. $[Q_{t},a_{5,2}]$ & $\rho_{5,w}^{24}$ & $[P_{t},b_{2}]\quad\mathsf{C}$ \\
T24. $[Q_{t},a_{6,2}]$ & $\rho_{6,y}^{23},\allowbreak\ \rho_{6,x}^{13}$ & $[Q_{t-1},b_{18}]\quad\mathsf{C}$ \\
T25. $[Q_{t},a_{9,5}]$ & $\rho_{9,w}^{45},\allowbreak\ \rho_{1,w}^{45}$ & $[Q_{t},b_{4}]\quad\mathsf{C}$ \\
T26. $[Q_{t},b_{1}]$ & $\rho_{3,y}^{35}$ & $[U_{t},b_{19}]\quad\mathsf{C}$ \\
T27. $[Q_{t},b_{12}]$ & $\rho_{8,w}^{14},\allowbreak\ \rho_{8,11}^{14}$ & $[a_{11,4},b_{12}]\quad\mathsf{L}$ \\
T28. $[Q_{t},b_{5}]$ & $\rho_{2,w}^{24}$ & $[P_{t},b_{13}]\quad\mathsf{C}$ \\
T29. $[Q_{t},b_{6}]$ & $\rho_{2,w}^{45},\allowbreak\ \rho_{3,w}^{25},\allowbreak\ \rho_{5,y}^{15},\allowbreak\ \rho_{5,y}^{45},\allowbreak\ \rho_{3,x}^{35}$ & $[U_{t-1},b_{19}]\quad\mathsf{C}$ \\
\end{longtable}\endgroup

\subsection{Prefix indices 0 and 1: $t\ge2$}\label{tab:prefix}
Set $x=r_1$, $y=r_2$. The extra rows are
\[
\begin{array}{c|ccccc}
x&P_1&P_0&Q_1&Q_0&U_1\\
y&P_2&P_1&Q_2&Q_1&U_2
\end{array}.
\]
The targets use $P_0,Q_0,P_1,Q_1$ and all 43 old labels: 114 are grounded,
and the other 58 are below. The premise $\mathsf E$ is justified before this
table: index 2 is interior if $t\ge3$ and terminal if $t=2$.
\begingroup
\footnotesize\setlength{\tabcolsep}{3pt}\renewcommand{\arraystretch}{1.23}
\begin{longtable}{@{}p{.195\textwidth}p{.55\textwidth}p{.225\textwidth}@{}}
\caption{Prefix old-label relations with the settled index-2 premise.}\\
\hline Target & Successive rectangles & Terminal and ground \\\hline
\endfirsthead
\multicolumn{3}{l}{\small\itshape Continued: Prefix old-label relations with the settled index-2 premise.}\\
\hline Target & Successive rectangles & Terminal and ground \\\hline
\endhead
\hline\endfoot
F1. $[P_{0},a_{10,4}]$ & $\rho_{10,x}^{24}$ & $[Q_{0},b_{8}]\quad\mathsf{C}$ \\
F2. $[P_{0},a_{10,5}]$ & $\rho_{10,x}^{25},\allowbreak\ \rho_{11,x}^{35},\allowbreak\ \rho_{8,y}^{14}$ & $[P_{2},a_{8,4}]\quad\mathsf{E}$ \\
F3. $[P_{0},a_{11,4}]$ & $\rho_{11,x}^{24}$ & $[Q_{0},b_{9}]\quad\mathsf{C}$ \\
F4. $[P_{0},a_{2,3}]$ & $\rho_{2,x}^{23},\allowbreak\ \rho_{2,y}^{24}$ & $[P_{1},b_{13}]\quad\mathsf{C}$ \\
F5. $[P_{0},a_{3,4}]$ & $\rho_{3,x}^{24}$ & $[Q_{0},b_{13}]\quad\mathsf{C}$ \\
F6. $[P_{0},a_{4,5}]$ & $\rho_{4,x}^{25},\allowbreak\ \rho_{4,x}^{45},\allowbreak\ \rho_{4,7}^{35},\allowbreak\ \rho_{6,7}^{15},\allowbreak\ \rho_{3,7}^{13},\allowbreak\ \rho_{3,x}^{14},\allowbreak\ \rho_{3,y}^{24}$ & $[Q_{1},b_{13}]\quad\mathsf{C}$ \\
F7. $[P_{0},a_{5,3}]$ & $\rho_{5,x}^{23},\allowbreak\ \rho_{5,y}^{24}$ & $[P_{1},b_{2}]\quad\mathsf{C}$ \\
F8. $[P_{0},a_{6,4}]$ & $\rho_{6,x}^{24},\allowbreak\ \rho_{6,7}^{23}$ & $[a_{7,2},b_{17}]\quad\mathsf{L}$ \\
F9. $[P_{0},a_{7,5}]$ & $\rho_{7,9}^{15}$ & $[a_{9,5},b_{15}]\quad\mathsf{L}$ \\
F10. $[P_{0},a_{8,3}]$ & $\rho_{8,9}^{13},\allowbreak\ \rho_{9,11}^{35},\allowbreak\ \rho_{9,10}^{25},\allowbreak\ \rho_{7,10}^{15},\allowbreak\ \rho_{7,w}^{35}$ & $[Q_{0},W]\quad\mathsf{H}$ \\
F11. $[P_{0},a_{8,4}]$ & $\rho_{8,9}^{14},\allowbreak\ \rho_{1,11}^{45},\allowbreak\ \rho_{1,11}^{34},\allowbreak\ \rho_{9,10}^{24},\allowbreak\ \rho_{7,10}^{14}$ & $[b_{11},b_{17}]\quad\mathsf{C}$ \\
F12. $[P_{0},b_{1}]$ & $\rho_{5,9}^{15},\allowbreak\ \rho_{5,9}^{45},\allowbreak\ \rho_{1,3}^{45},\allowbreak\ \rho_{1,3}^{34},\allowbreak\ \rho_{1,6}^{45},\allowbreak\ \rho_{1,6}^{34}$ & $[b_{17},b_{3}]\quad\mathsf{C}$ \\
F13. $[P_{0},b_{17}]$ & $\rho_{7,9}^{14}$ & $[b_{15},b_{3}]\quad\mathsf{L}$ \\
F14. $[P_{0},b_{19}]$ & $\rho_{3,x}^{23}$ & $[Q_{1},b_{13}]\quad\mathsf{C}$ \\
F15. $[P_{0},b_{4}]$ & $\rho_{1,x}^{23},\allowbreak\ \rho_{1,y}^{24}$ & $[P_{1},b_{3}]\quad\mathsf{H}$ \\
F16. $[Q_{0},a_{10,5}]$ & $\rho_{7,10}^{35},\allowbreak\ \rho_{7,11}^{25},\allowbreak\ \rho_{7,8}^{12}$ & $[b_{15},b_{6}]\quad\mathsf{C}$ \\
F17. $[Q_{0},a_{1,1}]$ & $\rho_{1,x}^{14}$ & $[P_{1},b_{3}]\quad\mathsf{H}$ \\
F18. $[Q_{0},a_{1,2}]$ & $\rho_{1,x}^{24}$ & $[P_{0},b_{3}]\quad\mathsf{H}$ \\
F19. $[Q_{0},a_{2,1}]$ & $\rho_{2,x}^{14}$ & $[P_{1},b_{13}]\quad\mathsf{C}$ \\
F20. $[Q_{0},a_{3,1}]$ & $\rho_{3,x}^{14},\allowbreak\ \rho_{3,y}^{24}$ & $[Q_{1},b_{13}]\quad\mathsf{C}$ \\
F21. $[Q_{0},a_{4,1}]$ & $\rho_{4,x}^{14}$ & $[P_{1},b_{7}]\quad\mathsf{C}$ \\
F22. $[Q_{0},a_{4,5}]$ & $\rho_{4,7}^{35},\allowbreak\ \rho_{6,7}^{15},\allowbreak\ \rho_{3,7}^{13},\allowbreak\ \rho_{3,x}^{14},\allowbreak\ \rho_{3,y}^{24}$ & $[Q_{1},b_{13}]\quad\mathsf{C}$ \\
F23. $[Q_{0},a_{5,2}]$ & $\rho_{5,x}^{24}$ & $[P_{0},b_{2}]\quad\mathsf{C}$ \\
F24. $[Q_{0},a_{6,2}]$ & $\rho_{6,7}^{23}$ & $[a_{7,2},b_{17}]\quad\mathsf{L}$ \\
F25. $[Q_{0},a_{9,5}]$ & $\rho_{9,x}^{45},\allowbreak\ \rho_{1,x}^{45}$ & $[Q_{0},b_{4}]\quad\mathsf{C}$ \\
F26. $[Q_{0},b_{1}]$ & $\rho_{3,7}^{35}$ & $[a_{7,5},b_{19}]\quad\mathsf{C}$ \\
F27. $[Q_{0},b_{12}]$ & $\rho_{8,x}^{14},\allowbreak\ \rho_{8,y}^{24},\allowbreak\ \rho_{2,y}^{45},\allowbreak\ \rho_{3,y}^{25},\allowbreak\ \rho_{5,x}^{15},\allowbreak\ \rho_{5,x}^{45},\allowbreak\ \rho_{3,7}^{35}$ & $[a_{7,5},b_{19}]\quad\mathsf{C}$ \\
F28. $[Q_{0},b_{5}]$ & $\rho_{2,x}^{24}$ & $[P_{0},b_{13}]\quad\mathsf{C}$ \\
F29. $[Q_{0},b_{6}]$ & $\rho_{8,x}^{24},\allowbreak\ \rho_{8,9}^{14},\allowbreak\ \rho_{1,11}^{45},\allowbreak\ \rho_{1,11}^{34},\allowbreak\ \rho_{9,10}^{24},\allowbreak\ \rho_{7,10}^{14}$ & $[b_{11},b_{17}]\quad\mathsf{C}$ \\
F30. $[P_{1},a_{10,4}]$ & $\rho_{10,x}^{14}$ & $[Q_{0},b_{11}]\quad\mathsf{C}$ \\
F31. $[P_{1},a_{10,5}]$ & $\rho_{10,x}^{15},\allowbreak\ \rho_{w,x}^{35}$ & $[Q_{1},W]\quad\mathsf{H}$ \\
F32. $[P_{1},a_{11,4}]$ & $\rho_{11,y}^{24}$ & $[Q_{1},b_{9}]\quad\mathsf{C}$ \\
F33. $[P_{1},a_{2,3}]$ & $\rho_{2,y}^{23}$ & $[Q_{2},b_{5}]\quad\mathsf{E}$ \\
F34. $[P_{1},a_{3,4}]$ & $\rho_{3,y}^{24}$ & $[Q_{1},b_{13}]\quad\mathsf{C}$ \\
F35. $[P_{1},a_{4,5}]$ & $\rho_{4,y}^{25},\allowbreak\ \rho_{4,y}^{45},\allowbreak\ \rho_{4,x}^{35},\allowbreak\ \rho_{6,x}^{15},\allowbreak\ \rho_{3,y}^{23}$ & $[Q_{2},b_{13}]\quad\mathsf{E}$ \\
F36. $[P_{1},a_{5,3}]$ & $\rho_{5,x}^{13}$ & $[Q_{1},b_{2}]\quad\mathsf{C}$ \\
F37. $[P_{1},a_{6,4}]$ & $\rho_{6,x}^{14}$ & $[Q_{0},b_{18}]\quad\mathsf{C}$ \\
F38. $[P_{1},a_{7,5}]$ & $\rho_{7,x}^{15},\allowbreak\ \rho_{9,x}^{25}$ & $[P_{0},a_{9,5}]\quad\mathsf{H}$ \\
F39. $[P_{1},a_{8,3}]$ & $\rho_{8,y}^{23}$ & $[Q_{2},b_{6}]\quad\mathsf{E}$ \\
F40. $[P_{1},a_{8,4}]$ & $\rho_{8,y}^{24},\allowbreak\ \rho_{2,y}^{45},\allowbreak\ \rho_{3,y}^{25},\allowbreak\ \rho_{5,x}^{15},\allowbreak\ \rho_{5,x}^{45},\allowbreak\ \rho_{3,7}^{35}$ & $[a_{7,5},b_{19}]\quad\mathsf{C}$ \\
F41. $[P_{1},b_{1}]$ & $\rho_{5,x}^{15},\allowbreak\ \rho_{5,x}^{45},\allowbreak\ \rho_{3,7}^{35}$ & $[a_{7,5},b_{19}]\quad\mathsf{C}$ \\
F42. $[P_{1},b_{17}]$ & $\rho_{6,x}^{13}$ & $[Q_{1},b_{18}]\quad\mathsf{C}$ \\
F43. $[P_{1},b_{19}]$ & $\rho_{3,y}^{23}$ & $[Q_{2},b_{13}]\quad\mathsf{E}$ \\
F44. $[P_{1},b_{4}]$ & $\rho_{1,y}^{23}$ & $[Q_{2},a_{1,2}]\quad\mathsf{E}$ \\
F45. $[Q_{1},a_{10,5}]$ & $\rho_{10,x}^{35},\allowbreak\ \rho_{11,x}^{25}$ & $[P_{0},b_{12}]\quad\mathsf{C}$ \\
F46. $[Q_{1},a_{1,1}]$ & $\rho_{1,y}^{14}$ & $[P_{2},b_{3}]\quad\mathsf{E}$ \\
F47. $[Q_{1},a_{1,2}]$ & $\rho_{1,y}^{24}$ & $[P_{1},b_{3}]\quad\mathsf{H}$ \\
F48. $[Q_{1},a_{2,1}]$ & $\rho_{2,y}^{14}$ & $[P_{2},b_{13}]\quad\mathsf{E}$ \\
F49. $[Q_{1},a_{3,1}]$ & $\rho_{3,y}^{14}$ & $[P_{2},a_{3,4}]\quad\mathsf{E}$ \\
F50. $[Q_{1},a_{4,1}]$ & $\rho_{4,x}^{13}$ & $[P_{1},b_{18}]\quad\mathsf{C}$ \\
F51. $[Q_{1},a_{4,5}]$ & $\rho_{4,x}^{35},\allowbreak\ \rho_{6,x}^{15},\allowbreak\ \rho_{3,y}^{23}$ & $[Q_{2},b_{13}]\quad\mathsf{E}$ \\
F52. $[Q_{1},a_{5,2}]$ & $\rho_{5,y}^{24}$ & $[P_{1},b_{2}]\quad\mathsf{C}$ \\
F53. $[Q_{1},a_{6,2}]$ & $\rho_{6,x}^{23},\allowbreak\ \rho_{7,9}^{14}$ & $[b_{15},b_{3}]\quad\mathsf{L}$ \\
F54. $[Q_{1},a_{9,5}]$ & $\rho_{9,y}^{45},\allowbreak\ \rho_{1,y}^{45}$ & $[Q_{1},b_{4}]\quad\mathsf{C}$ \\
F55. $[Q_{1},b_{1}]$ & $\rho_{3,x}^{35}$ & $[U_{1},b_{19}]\quad\mathsf{C}$ \\
F56. $[Q_{1},b_{12}]$ & $\rho_{8,y}^{14}$ & $[P_{2},a_{8,4}]\quad\mathsf{E}$ \\
F57. $[Q_{1},b_{5}]$ & $\rho_{2,y}^{24}$ & $[P_{1},b_{13}]\quad\mathsf{C}$ \\
F58. $[Q_{1},b_{6}]$ & $\rho_{2,y}^{45},\allowbreak\ \rho_{3,y}^{25},\allowbreak\ \rho_{5,x}^{15},\allowbreak\ \rho_{5,x}^{45},\allowbreak\ \rho_{3,7}^{35}$ & $[a_{7,5},b_{19}]\quad\mathsf{C}$ \\
\end{longtable}\endgroup

\subsection{The single inserted row: $t=1$}\label{tab:one}
Here $x=r_1$ and the two nonfixed rows are
\[
\begin{array}{c|ccccc}
x&P_1&P_0&Q_1&Q_0&U_1\\
w&b_{10}&P_1&b_{11}&Q_1&W
\end{array}.
\]
There are 172 targets: 114 grounded and the following 58.
All grounds are immediate column/line/identity relations or instances of the
path lemma. In particular, this table does not assume any index-2 relation.
\begingroup
\footnotesize\setlength{\tabcolsep}{3pt}\renewcommand{\arraystretch}{1.23}
\begin{longtable}{@{}p{.195\textwidth}p{.55\textwidth}p{.225\textwidth}@{}}
\caption{Old-label relations when only one row is inserted.}\\
\hline Target & Successive rectangles & Terminal and ground \\\hline
\endfirsthead
\multicolumn{3}{l}{\small\itshape Continued: Old-label relations when only one row is inserted.}\\
\hline Target & Successive rectangles & Terminal and ground \\\hline
\endhead
\hline\endfoot
O1. $[P_{0},a_{10,4}]$ & $\rho_{10,x}^{24}$ & $[Q_{0},b_{8}]\quad\mathsf{C}$ \\
O2. $[P_{0},a_{10,5}]$ & $\rho_{10,x}^{25},\allowbreak\ \rho_{11,x}^{35},\allowbreak\ \rho_{8,w}^{14},\allowbreak\ \rho_{8,11}^{14}$ & $[a_{11,4},b_{12}]\quad\mathsf{L}$ \\
O3. $[P_{0},a_{11,4}]$ & $\rho_{11,x}^{24}$ & $[Q_{0},b_{9}]\quad\mathsf{C}$ \\
O4. $[P_{0},a_{2,3}]$ & $\rho_{2,x}^{23},\allowbreak\ \rho_{2,w}^{24}$ & $[P_{1},b_{13}]\quad\mathsf{C}$ \\
O5. $[P_{0},a_{3,4}]$ & $\rho_{3,x}^{24}$ & $[Q_{0},b_{13}]\quad\mathsf{C}$ \\
O6. $[P_{0},a_{4,5}]$ & $\rho_{4,x}^{25},\allowbreak\ \rho_{4,x}^{45},\allowbreak\ \rho_{4,7}^{35},\allowbreak\ \rho_{6,7}^{15},\allowbreak\ \rho_{3,7}^{13},\allowbreak\ \rho_{3,x}^{14},\allowbreak\ \rho_{3,w}^{24}$ & $[Q_{1},b_{13}]\quad\mathsf{C}$ \\
O7. $[P_{0},a_{5,3}]$ & $\rho_{5,x}^{23},\allowbreak\ \rho_{5,w}^{24}$ & $[P_{1},b_{2}]\quad\mathsf{C}$ \\
O8. $[P_{0},a_{6,4}]$ & $\rho_{6,x}^{24},\allowbreak\ \rho_{6,7}^{23}$ & $[a_{7,2},b_{17}]\quad\mathsf{L}$ \\
O9. $[P_{0},a_{7,5}]$ & $\rho_{7,9}^{15}$ & $[a_{9,5},b_{15}]\quad\mathsf{L}$ \\
O10. $[P_{0},a_{8,3}]$ & $\rho_{8,9}^{13},\allowbreak\ \rho_{9,11}^{35},\allowbreak\ \rho_{9,10}^{25},\allowbreak\ \rho_{7,10}^{15},\allowbreak\ \rho_{7,w}^{35}$ & $[Q_{0},W]\quad\mathsf{H}$ \\
O11. $[P_{0},a_{8,4}]$ & $\rho_{8,9}^{14},\allowbreak\ \rho_{1,11}^{45},\allowbreak\ \rho_{1,11}^{34},\allowbreak\ \rho_{9,10}^{24},\allowbreak\ \rho_{7,10}^{14}$ & $[b_{11},b_{17}]\quad\mathsf{C}$ \\
O12. $[P_{0},b_{1}]$ & $\rho_{5,9}^{15},\allowbreak\ \rho_{5,9}^{45},\allowbreak\ \rho_{1,3}^{45},\allowbreak\ \rho_{1,3}^{34},\allowbreak\ \rho_{1,6}^{45},\allowbreak\ \rho_{1,6}^{34}$ & $[b_{17},b_{3}]\quad\mathsf{C}$ \\
O13. $[P_{0},b_{17}]$ & $\rho_{7,9}^{14}$ & $[b_{15},b_{3}]\quad\mathsf{L}$ \\
O14. $[P_{0},b_{19}]$ & $\rho_{3,x}^{23}$ & $[Q_{1},b_{13}]\quad\mathsf{C}$ \\
O15. $[P_{0},b_{4}]$ & $\rho_{1,x}^{23},\allowbreak\ \rho_{1,w}^{24}$ & $[P_{1},b_{3}]\quad\mathsf{H}$ \\
O16. $[Q_{0},a_{10,5}]$ & $\rho_{7,10}^{35},\allowbreak\ \rho_{7,11}^{25},\allowbreak\ \rho_{7,8}^{12}$ & $[b_{15},b_{6}]\quad\mathsf{C}$ \\
O17. $[Q_{0},a_{1,1}]$ & $\rho_{1,x}^{14}$ & $[P_{1},b_{3}]\quad\mathsf{H}$ \\
O18. $[Q_{0},a_{1,2}]$ & $\rho_{1,x}^{24}$ & $[P_{0},b_{3}]\quad\mathsf{H}$ \\
O19. $[Q_{0},a_{2,1}]$ & $\rho_{2,x}^{14}$ & $[P_{1},b_{13}]\quad\mathsf{C}$ \\
O20. $[Q_{0},a_{3,1}]$ & $\rho_{3,x}^{14},\allowbreak\ \rho_{3,w}^{24}$ & $[Q_{1},b_{13}]\quad\mathsf{C}$ \\
O21. $[Q_{0},a_{4,1}]$ & $\rho_{4,x}^{14}$ & $[P_{1},b_{7}]\quad\mathsf{C}$ \\
O22. $[Q_{0},a_{4,5}]$ & $\rho_{4,7}^{35},\allowbreak\ \rho_{6,7}^{15},\allowbreak\ \rho_{3,7}^{13},\allowbreak\ \rho_{3,x}^{14},\allowbreak\ \rho_{3,w}^{24}$ & $[Q_{1},b_{13}]\quad\mathsf{C}$ \\
O23. $[Q_{0},a_{5,2}]$ & $\rho_{5,x}^{24}$ & $[P_{0},b_{2}]\quad\mathsf{C}$ \\
O24. $[Q_{0},a_{6,2}]$ & $\rho_{6,7}^{23}$ & $[a_{7,2},b_{17}]\quad\mathsf{L}$ \\
O25. $[Q_{0},a_{9,5}]$ & $\rho_{9,x}^{45},\allowbreak\ \rho_{1,x}^{45}$ & $[Q_{0},b_{4}]\quad\mathsf{C}$ \\
O26. $[Q_{0},b_{1}]$ & $\rho_{3,7}^{35}$ & $[a_{7,5},b_{19}]\quad\mathsf{C}$ \\
O27. $[Q_{0},b_{12}]$ & $\rho_{8,x}^{14},\allowbreak\ \rho_{8,w}^{24},\allowbreak\ \rho_{2,w}^{45},\allowbreak\ \rho_{3,w}^{25},\allowbreak\ \rho_{5,x}^{15},\allowbreak\ \rho_{5,x}^{45},\allowbreak\ \rho_{3,7}^{35}$ & $[a_{7,5},b_{19}]\quad\mathsf{C}$ \\
O28. $[Q_{0},b_{5}]$ & $\rho_{2,x}^{24}$ & $[P_{0},b_{13}]\quad\mathsf{C}$ \\
O29. $[Q_{0},b_{6}]$ & $\rho_{8,x}^{24},\allowbreak\ \rho_{8,9}^{14},\allowbreak\ \rho_{1,11}^{45},\allowbreak\ \rho_{1,11}^{34},\allowbreak\ \rho_{9,10}^{24},\allowbreak\ \rho_{7,10}^{14}$ & $[b_{11},b_{17}]\quad\mathsf{C}$ \\
O30. $[P_{1},a_{10,4}]$ & $\rho_{10,x}^{14}$ & $[Q_{0},b_{11}]\quad\mathsf{C}$ \\
O31. $[P_{1},a_{10,5}]$ & $\rho_{10,w}^{25},\allowbreak\ \rho_{11,w}^{35}$ & $[b_{11},b_{12}]\quad\mathsf{C}$ \\
O32. $[P_{1},a_{11,4}]$ & $\rho_{11,w}^{24}$ & $[Q_{1},b_{9}]\quad\mathsf{C}$ \\
O33. $[P_{1},a_{2,3}]$ & $\rho_{2,w}^{23},\allowbreak\ \rho_{8,10}^{15}$ & $[a_{10,5},b_{12}]\quad\mathsf{C}$ \\
O34. $[P_{1},a_{3,4}]$ & $\rho_{3,w}^{24}$ & $[Q_{1},b_{13}]\quad\mathsf{C}$ \\
O35. $[P_{1},a_{4,5}]$ & $\rho_{4,w}^{25},\allowbreak\ \rho_{4,w}^{45},\allowbreak\ \rho_{4,x}^{35},\allowbreak\ \rho_{6,x}^{15},\allowbreak\ \rho_{3,w}^{23},\allowbreak\ \rho_{2,w}^{34}$ & $[Q_{1},a_{2,3}]\quad\mathsf{C}$ \\
O36. $[P_{1},a_{5,3}]$ & $\rho_{5,x}^{13}$ & $[Q_{1},b_{2}]\quad\mathsf{C}$ \\
O37. $[P_{1},a_{6,4}]$ & $\rho_{6,x}^{14}$ & $[Q_{0},b_{18}]\quad\mathsf{C}$ \\
O38. $[P_{1},a_{7,5}]$ & $\rho_{7,x}^{15},\allowbreak\ \rho_{9,x}^{25}$ & $[P_{0},a_{9,5}]\quad\mathsf{H}$ \\
O39. $[P_{1},a_{8,3}]$ & $\rho_{8,w}^{23},\allowbreak\ \rho_{8,10}^{12}$ & $[b_{12},b_{8}]\quad\mathsf{L}$ \\
O40. $[P_{1},a_{8,4}]$ & $\rho_{8,w}^{24},\allowbreak\ \rho_{2,w}^{45},\allowbreak\ \rho_{3,w}^{25},\allowbreak\ \rho_{5,x}^{15},\allowbreak\ \rho_{5,x}^{45},\allowbreak\ \rho_{3,7}^{35}$ & $[a_{7,5},b_{19}]\quad\mathsf{C}$ \\
O41. $[P_{1},b_{1}]$ & $\rho_{5,x}^{15},\allowbreak\ \rho_{5,x}^{45},\allowbreak\ \rho_{3,7}^{35}$ & $[a_{7,5},b_{19}]\quad\mathsf{C}$ \\
O42. $[P_{1},b_{17}]$ & $\rho_{6,x}^{13}$ & $[Q_{1},b_{18}]\quad\mathsf{C}$ \\
O43. $[P_{1},b_{19}]$ & $\rho_{3,w}^{23},\allowbreak\ \rho_{2,w}^{34}$ & $[Q_{1},a_{2,3}]\quad\mathsf{C}$ \\
O44. $[P_{1},b_{4}]$ & $\rho_{1,x}^{13},\allowbreak\ \rho_{1,w}^{14},\allowbreak\ \rho_{9,w}^{14}$ & $[P_{0},Q_{1}]\quad\mathsf{H}$ \\
O45. $[Q_{1},a_{10,5}]$ & $\rho_{10,x}^{35},\allowbreak\ \rho_{11,x}^{25}$ & $[P_{0},b_{12}]\quad\mathsf{C}$ \\
O46. $[Q_{1},a_{1,1}]$ & $\rho_{1,w}^{14},\allowbreak\ \rho_{9,w}^{14}$ & $[P_{0},Q_{1}]\quad\mathsf{H}$ \\
O47. $[Q_{1},a_{1,2}]$ & $\rho_{1,w}^{24}$ & $[P_{1},b_{3}]\quad\mathsf{H}$ \\
O48. $[Q_{1},a_{2,1}]$ & $\rho_{2,w}^{14},\allowbreak\ \rho_{3,w}^{12}$ & $[P_{1},a_{3,1}]\quad\mathsf{C}$ \\
O49. $[Q_{1},a_{3,1}]$ & $\rho_{3,x}^{13},\allowbreak\ \rho_{3,w}^{23},\allowbreak\ \rho_{2,w}^{34}$ & $[Q_{1},a_{2,3}]\quad\mathsf{C}$ \\
O50. $[Q_{1},a_{4,1}]$ & $\rho_{4,x}^{13}$ & $[P_{1},b_{18}]\quad\mathsf{C}$ \\
O51. $[Q_{1},a_{4,5}]$ & $\rho_{4,x}^{35},\allowbreak\ \rho_{6,x}^{15},\allowbreak\ \rho_{3,w}^{23},\allowbreak\ \rho_{2,w}^{34}$ & $[Q_{1},a_{2,3}]\quad\mathsf{C}$ \\
O52. $[Q_{1},a_{5,2}]$ & $\rho_{5,w}^{24}$ & $[P_{1},b_{2}]\quad\mathsf{C}$ \\
O53. $[Q_{1},a_{6,2}]$ & $\rho_{6,x}^{23},\allowbreak\ \rho_{7,9}^{14}$ & $[b_{15},b_{3}]\quad\mathsf{L}$ \\
O54. $[Q_{1},a_{9,5}]$ & $\rho_{9,w}^{45},\allowbreak\ \rho_{1,w}^{45}$ & $[Q_{1},b_{4}]\quad\mathsf{C}$ \\
O55. $[Q_{1},b_{1}]$ & $\rho_{3,x}^{35}$ & $[U_{1},b_{19}]\quad\mathsf{C}$ \\
O56. $[Q_{1},b_{12}]$ & $\rho_{8,w}^{14},\allowbreak\ \rho_{8,11}^{14}$ & $[a_{11,4},b_{12}]\quad\mathsf{L}$ \\
O57. $[Q_{1},b_{5}]$ & $\rho_{2,w}^{24}$ & $[P_{1},b_{13}]\quad\mathsf{C}$ \\
O58. $[Q_{1},b_{6}]$ & $\rho_{2,w}^{45},\allowbreak\ \rho_{3,w}^{25},\allowbreak\ \rho_{5,x}^{15},\allowbreak\ \rho_{5,x}^{45},\allowbreak\ \rho_{3,7}^{35}$ & $[a_{7,5},b_{19}]\quad\mathsf{C}$ \\
\end{longtable}\endgroup

\end{document}